\newcommand\BibTeX{{\rmfamily B\kern-.05em \textsc{i\kern-.025em b}\kern-.08emT\kern-.1667em\lower.7ex\hbox{E}\kern-.125emX}}
\def\BState{\State\hskip-\ALG@thistlm}
\newcommand{\B}[1]{\mbox{\boldmath $#1$}}
\newtheorem{assumption}{ASSUMPTION}
\newcommand{\ZZ}{\mathbb{Z}}
\newcommand{\RR}{\mathbb{R}}
\newcommand{\NN}{\mathbb{N}}
\newcommand{\CC}{\mathbb{C}}
\newcommand\ba{\mathbf{a}}
\newcommand\bp{\mathbf{p}}
\newcommand\ie{{\it\thinspace i.e.}}
    \newcommand*\patchAmsMathEnvironmentForLineno[1]{%
       \expandafter\let\csname old#1\expandafter\endcsname\csname #1\endcsname
       \expandafter\let\csname oldend#1\expandafter\endcsname\csname end#1\endcsname
       \renewenvironment{#1}%
          {\linenomath\csname old#1\endcsname}%
          {\csname oldend#1\endcsname\endlinenomath}}%
    \newcommand*\patchBothAmsMathEnvironmentsForLineno[1]{%
       \patchAmsMathEnvironmentForLineno{#1}%
       \patchAmsMathEnvironmentForLineno{#1*}}%
\DeclareMathOperator{\diag}{\mathcal D}
\title{Bezout-like polynomial equations associated with dual univariate interpolating subdivision schemes}
\titlerunning{Bezout-like equations for dual interpolating subdivision}
\author{Luca Gemignani \and Lucia Romani \and Alberto Viscardi}
\institute{ Luca Gemignani \at
	Dipartimento di Informatica, Universit\`{a} di Pisa,\\
	Largo Bruno Pontecorvo, 3 - 56127 Pisa, Italy\\
	\email{luca.gemignani@unipi.it}
	\and
	Lucia Romani, Alberto Viscardi \at Dipartimento di Matematica, Alma Mater Studiorum Universit\`{a} di Bologna,\\
	Piazza di Porta San Donato, 5 - 40126 Bologna, Italy\\
	\email{lucia.romani@unibo.it}, \email{alberto.viscardi@unibo.it}
}
\date{}
\begin{document}
\maketitle

\begin{abstract}
  The algebraic characterization of  dual univariate interpolating subdivision schemes is investigated.
  Specifically,  we provide a constructive approach for finding dual univariate interpolating subdivision schemes based on the
  solutions of certain associated polynomial equations.  The  proposed approach also makes possible to identify
  conditions for the existence of the sought schemes.

\end{abstract}

\keywords{Bezout equation; Univariate dual subdivision; Higher arity; Interpolation}

\subclass{65F05 \and 68W30 \and 65D05 \and 65D17}

\section{Introduction}

Subdivision schemes are useful tools for the fast generation of graphs of functions, smooth curves and surfaces by the application of iterative refinements to an initial set of discrete data.
The major fields of application of subdivision schemes are  Computer Graphics and Animation, Computer-Aided Geometric Design and Signal/Image Processing, but a further
motivation for their study is also their close relation to multiresolution analysis and wavelets.
The last connection was especially investigated in the case of interpolating subdivision schemes and
it was pointed out that interpolatory subdivision schemes of Dubuc-Deslauriers  \cite{MR982724} are connected to
orthonormal wavelets of Daubechies \cite{CDbook,Micchelli1996InterpolatorySS}.
Interpolating subdivision schemes were also deeply studied because considered very efficient in representing smooth curves and surfaces passing through a given set of points.
In fact, after five or six subdivision iterations only, they are capable of providing the refined set of points
needed to represent on the screen the desired smooth limit shape interpolating the given data.
The main properties of interpolating subdivision schemes were investigated over the past 20 years by several researchers
(see, e.g., \cite{DUBUC1986185, Dyn02, GAVHIMOLEFE2019354}) and many approaches were proposed to design their refinement rules.
However, as far as we are aware, before the papers  \cite{LUCIA,RV}, no one ever tried to construct interpolating subdivision schemes
that do not satisfy the stepwise interpolation property and are thus not defined via refinement rules that at each stage of the iteration
leave the previous set of points unchanged.
Stepwise interpolating subdivision schemes - also known as primal interpolating subdivision schemes \cite{MR982724,MR2008967} -
are defined by finite subdivision masks of odd width that contain as a special submask the sequence
${\boldsymbol \delta}=\{\delta_{0,j}, \, j \in \mathbb{Z} \}$.
Differently, members of the most recently introduced class of non-stepwise interpolating subdivision schemes --also known as dual interpolating subdivision schemes--
are characterized by finite masks with an even number of entries that do not necessarily contain as a special submask the ${\boldsymbol \delta}$ sequence.
One of the contributions of \cite{RV} was to show that, under some suitable/auxiliary assumptions, the  coefficients of the subdivision mask of a dual interpolating
scheme can be (possibly) determined by the solution  of an associated rectangular linear system.
This  system can be clearly inconsistent for some choices of  input data and/or size (length) of the mask.
For a given input data set  the approach taken in \cite{RV} consists of an exhaustive analysis
of the associated linear systems of increasing sizes   in order to identify possible consistent configurations.

In this paper we pursue  a different method for  constructing dual interpolating subdivision schemes based on the reduction of the matrix formulation into a functional setting to
solving a certain Bezout-like polynomial equation.  The method makes possible to address the consistency issues by detecting  suitable conditions on the input data which guarantee the
existence of  a dual interpolating scheme.  Additionally, it  yields a  full  characterization of the set of solutions which can be exploited to fulfil additional demands and
properties of the solution mask.
From the point of view of applications, such a computational approach allows the user to meet specific requests in terms of
polynomial reproduction, support size and regularity. Even though a general result concerning convergence and/or
smoothness of a dual interpolating subdivision scheme is not yet available, in all the considered specific cases the analysis can be
performed by using ad-hoc techniques. Illustrative examples and comparisons with existing primal interpolating schemes are provided and discussed.

\section{Background and notation}\label{Back}

In this section we briefly recall some needed background on subdivision schemes of arbitrary arity $m \in \NN$, $m \geq 2$.

Any linear, stationary subdivision scheme is identified by a \emph{refinement mask} ${\ba}:=\left \{ a_{i}\in \RR,\ i\in \ZZ\right\}$
that is usually assumed to have finite support, \ie\ to satisfy
$a_i=0$ for $i\not\in [-L, L]$ for suitable $L > 0$.

The \emph{subdivision scheme} identified by the mask $\ba$ consists of the subsequent application of the \emph{subdivision operator}
$$
S_{\ba} \ :\ \ell(\ZZ) \rightarrow
\ell(\ZZ) \ ,\qquad \displaystyle{(S_{\ba}\
{\bp})_{i}\;:=\;\sum_{j \in\ZZ} a_{i-m j}\ {p}_{j}},\quad i \in
\ZZ \ ,
$$
which provides the linear rules determining the successive refinements of the initial sequence of discrete data
$\bp:=\left(p_i\in\RR, i\in \ZZ\right) \in \ell(\ZZ)$.
Introducing the notation ${\bp}^{(0)}:={\bp}$, we can thus describe
the subdivision scheme as an iterative method that at the $k$-th step generates
the refined scalar sequence
\begin{equation}\label{sscheme}
{\bf \bp}^{(k+1)}\;:=\;S_{\ba} \, {\bp}^{(k)}, \qquad  k\ge 0.
\end{equation}
Attaching the data $p_i^{(k)}$ generated at the $k$-th step to the parameter values $t^{(k)}_i$ with
$$t^{(k)}_i\;<\;t_{i+1}^{(k)}, \qquad \hbox{and} \qquad t_{i+1}^{(k)}-t_i^{(k)}\;=\;m^{-k},\qquad k\;\ge\; 0$$
(these are usually set as $t_i^{(k)}:=m^{-k} i$) we see that the subdivision process generates denser and
denser sequences of data so that a notion of convergence can be established by taking into account the piecewise
linear function $P^{(k)}$ that interpolates the data, namely
\[
  P^{(k)}(t_i^{(k)}) \;=\; p_i^{(k)}, \qquad
  P^{(k)}|_{[t_i^{(k)},t_{i+1}^{(k)}]} \in \Pi_1, \qquad
  i\in\ZZ,\quad k\geq0,
\]
where $\Pi_1$ is the space of linear polynomials.
If the sequence of the continuous functions $\{P^{(k)},\ k\ge 0\}$ converges uniformly, then we denote its
limit by
\[
  f_\bp \;:=\; \lim_{k\to\infty} P^{(k)}
\]
and say that $f_\bp$ is the \emph{limit function} of
the subdivision scheme based on the rule (\ref{sscheme}) for the data $\bp$ \cite{MR1079033}. When $\mathbf{p}={\boldsymbol\delta}$, $f_{\boldsymbol\delta}$ is called \emph{basic limit function}.\\
The analysis of convergence of a subdivision scheme can be accomplished by studying the properties of the so-called \emph{symbol} of the subdivision mask \cite{MR1172120}.
The symbol of a finitely supported sequence $\ba$ is defined as the Laurent polynomial
$$
a(z) \;:=\; \sum_{i\in \ZZ}a_i \, z^i,  \qquad z \in \CC\setminus \{0\}.
$$
Besides convergence and smoothness, many other properties of a subdivision scheme, like polynomial generation and
reproduction, can be checked by investigating algebraic conditions on the subdivision symbol \cite{MR2775138}.
While the term \emph{polynomial generation} refers to the capability of the subdivision scheme of providing polynomials as limit functions,
with \emph{polynomial reproduction} we mean the capability of a subdivision scheme of reproducing in the limit exactly the same polynomial from which the data are sampled.
The property of polynomial reproduction is very important since strictly connected to the approximation order of the subdivision scheme and to its regularity \cite{CHOI2006351,MR2474706}.
With respect to the capability of reproducing polynomials up to a certain degree, the standard parametrization (corresponding to the choice $t_i^{(k)}:=m^{-k} i$, $i \in \ZZ$)
is not always the optimal one. Indeed, the choice $t_i^{(k)}:=m^{-k} (i+\sigma/(m-1))$
with $\sigma=a^{(1)}(1)/m$, turns out to be the recommended selection \cite{MR3071114}.
The subdivision schemes for which  $\sigma \in \ZZ$ are termed \emph{primal}, whereas the ones for which $\sigma \in (2\ZZ+1)/2$ are called \emph{dual}.
The target of this work are dual schemes. While dual approximating schemes were investigated extensively (see, e.g., \cite{MR3071114,DUBUC2011966} and references therein),
to the best of our knowledge dual interpolating schemes were only considered in the recent papers \cite{LUCIA,RV}.

\section{Basic reductions}

The aim of this section is to investigate the algebraic characterization of univariate dual interpolating subdivision schemes of arity $m$.
According to the results shown in \cite{RV}, the construction of such schemes requires as input the desired degree of polynomial reproduction and some samples of the
resulting basic limit function $f_{\boldsymbol \delta}$. A similar procedure was investigated in \cite{DeVilliers2016759,MR1790328}, where the samples of
the basic limit function at the integers were required: here instead the samples at the integers are fixed to be the $\boldsymbol\delta$ sequence and
information about the samples at the half-integers are required.\\
More specifically,  in \cite{RV} it is seen that   taking Fourier transforms on both sides of the refinement equation for the  basic limit function
  $f_{\boldsymbol \delta}$  allows one to describe
the mask of dual interpolatory schemes  in a matrix setting in terms of the solution of certain bi-infinite Toeplitz-like linear systems
in banded form. In this paper we exploit the interplay between the functional and the matrix settings into more details. In particular, from the matrix setting we come back to
the functional  one by relying upon the  connection of Toeplitz-like systems with corresponding  Bezout-like polynomial equations.   This  connection yields
a constructive approach to determine the associated symbols. Moreover, the proposed approach also makes possible to identify conditions
for the existence of the sought dual interpolatory schemes\\
In the following, to simplify the presentation, we distinguish between odd and even arity.

\subsection{The odd arity case}

Now let us consider the solution of the linear system $(35)$ in \cite{RV} for the case where
$m=2\ell+1$ is an odd integer.   The system is defined as follows:
\begin{equation}\label{sys}
  M \B a\;=\;\B c, \qquad M\;=\;(\mu_{i,j})_{i,j\in \mathbb Z}, \qquad  \B c \;=\;(c_i)_{i\in \mathbb Z}
  \end{equation}
  where
  \[
  \mu_{i,j}=\left\{\begin{array}{cl}
  \varphi \left(\displaystyle\frac{i+1}{2}-j \right), & \mbox{ if } i\in 2m\mathbb Z,\\
  \\
  1,  & \mbox{ if } i\in m(2\mathbb Z+1), \ j=\frac{i+1}{2},\\ \\
  0, & \mbox{ otherwise,}
  \end{array}\right. 
  \]
  \[
  c_i=\left\{\begin{array}{cl}
  1,  & \mbox{ if } i=0,\\ \\
  \varphi \left(\displaystyle\frac{i}{2m} \right),  & \mbox{ if } i\in m(2\mathbb Z+1),\\ \\
   0, & \mbox{ otherwise,}
  \end{array}\right.
  \]
  and $\varphi\colon (2\mathbb Z+1)/2 \rightarrow \mathbb R$ is  a given fixed function. By  suppressing zero rows in
  both $M$ and $\B c$ we obtain  the equivalent linear system
 \begin{equation}\label{sys1}
   \widehat M \B a\;=\;\widehat {\B c}, \qquad
   \widehat M\;=\;(\widehat \mu_{i,j})_{i,j\in \mathbb Z}, \qquad  \widehat {\B c} \;=\;(\widehat c_i)_{i\in \mathbb Z}
  \end{equation}
  where
  \[
  \widehat  \mu_{i,j}=\left\{\begin{array}{cl}
  \varphi \left(\displaystyle\frac{im+1}{2}-j \right),&\mbox{ if }  \mod(i,2)=0,\\ \\
  1, & \mbox{ if }  \mod(i,2)=1, \ j=\frac{im+1}{2},
  \end{array}\right.
  \]
  \[
  \widehat c_i=\left\{\begin{array}{cl}
  1,  & \mbox{ if } i=0,\\ \\
  \varphi \left(\displaystyle\frac{i}{2} \right), & \mbox{ if }  \mod(i,2)=1,\\ \\
   0, & \mbox{ otherwise.}
  \end{array}\right.
  \]
The interplay between  computations with polynomials and Toeplitz-like matrices can  be exploited to recast the solution of the linear system \eqref{sys} in terms of
  solving an associated Bezout-like polynomial equation. Indeed  from the proof of Theorem 4.1 in \cite{RV} one deduces that  the entries of the unknown
  vector $\B a$ satisfy
  \begin{equation}\label{sys1p}
    \left\{\begin{array}{ll}
    \displaystyle\sum_{\alpha\in m(2\mathbb Z+1)}\varphi \left(\frac{\alpha}{2m} \right)z^{\alpha}= \sum_{\alpha\in m(2\mathbb Z+1)} a_{\frac{\alpha+1}{2}} z^\alpha\\ \\
    \displaystyle 1=\sum_{\alpha\in 2m\mathbb Z} \sum_{\beta\in \mathbb Z} a_{\beta} \, \varphi \left(\frac{\alpha+1}{2}-\beta \right)z^{\alpha} \end{array}\right.
  \end{equation}
  which implies
  \begin{equation}\label{sys1pp}
    \left\{\begin{array}{ll}
    a_{mi+\frac{m+1}{2}}=\varphi \left(\frac{2i+1}{2} \right), \qquad  i\in \mathbb Z, \\ \\
    \displaystyle 1-\sum_{\alpha\in 2m\mathbb Z}\; \sum_{\beta\in m\mathbb Z+\frac{m+1}{2}} a_{\beta} \varphi \left(\frac{\alpha+1}{2}-\beta \right) z^{\alpha}=\\ \\
    \displaystyle\qquad\qquad\qquad\sum_{\alpha\in 2m\mathbb Z} \sum_{\substack{\beta\in \mathbb Z\\ \mod(m,\beta)\neq\frac{m+1}{2}} } a_{\beta} \varphi \left(\frac{\alpha+1}{2}-\beta \right) z^{\alpha}.
     \end{array}\right.
  \end{equation}
  The system \eqref{sys1pp} can be rewritten into a  more compact form   by using   the decomposition of
  $a(z)=\sum_{i\in\mathbb Z}a_iz^i$   that involves the sub-symbols of the scheme   given by
  \begin{equation}\label{dec}
    a(z)=\sum_{i=0}^{m-1}a_i(z^m) z^i, \qquad a_\ell(z)=\sum_{i\in \mathbb Z} a_{mi+\ell}z^i,\quad \ 0\leq \ell \leq m-1.
  \end{equation}
  Let us introduce the corresponding decomposition of the Laurent polynomial
  $\phi(z)= \sum_{\ell\in \mathbb Z}  \varphi \left(\displaystyle\frac{1}{2}+\ell \right) z^{\ell}$ defined by
  \begin{equation}\label{dec1}
   \phi(z)=\sum_{i=0}^{m-1}\phi_i(z^m) z^{-i}, \qquad \phi_\ell(z)=\sum_{i\in \mathbb Z}\varphi \left(\frac{2mi+1}{2}-\ell \right)z^i, \quad \ 0\leq \ell \leq m-1.
  \end{equation}
  The first equation  of \eqref{sys1pp} determines $a_{\frac{m+1}{2}}(z)$. Then the second equation can  be read as follows
  \[
    1-a_{\frac{m+1}{2}}(z^m) \phi_{\frac{m+1}{2}}(z^m)\;=\;\sum_{i=0, i\neq \frac{m+1}{2}}^{m-1} a_i(z^m) \phi_i(z^m)
    \]
    or, equivalently,
    \begin{equation}\label{dec11}
      1-a_{\frac{m+1}{2}}(z) \phi_{\frac{m+1}{2}}(z)\;=\;\sum_{i=0, i\neq \frac{m+1}{2}}^{m-1} a_i(z) \phi_i(z).
    \end{equation}
    Our computational task is  therefore reduced to compute a  Laurent polynomial $a(z)$ defined as in \eqref{dec}  satisfying the
    Bezout-like polynomial equation \eqref{dec11}.  It is quite natural for convergence and reproducibility issues
    to  impose some other constraints of the form
    \begin{equation}\label{addit}
      \begin{array}{ll}
        a_i(1)=1, \quad 0\leq i\leq m-1, \\ \\
        a(z)= \displaystyle \left(\frac{1+z+\ldots +z^{m-1}}{m}\right)^d b(z).
      \end{array}
    \end{equation}
    Our proposed construction of such a polynomial $a(z)$ works under some additional assumptions on the  input data
    $\{\varphi((2k+1)/2)\}_{k=-\kappa}^{\kappa-1}$ encoded in the function $\phi(z)$.  More  specifically:
	\begin{assumption}\label{a1}: We  suppose that   $1-z\phi(z^2)=(z-1)^d\gamma(z)$ for a  certain $\gamma(z)\in \mathbb R[z,z^{-1}]$  the ring of Laurent polynomials in $z,z^{-1}$ over $\mathbb R$.
    \end{assumption}
    \begin{assumption}\label{a2}: We suppose that  $\phi_i(z)\in \mathbb R[z,z^{-1}]$,  $0\leq i\leq m-1$, $i\neq (m+1)/2$ are relatively prime, i.e. they have no common zeros.
    \end{assumption}
    Under these assumptions our composite  approach for computing $a(z)$  proceeds  by the following steps.

    \subsection{The proposed approach}

   The first step consists of  determining the values
    $a_i^{(s)}(1)$, $0\leq i\leq m-1$, $s=0,\ldots d-1$. From \eqref{addit} one gets immediately  $a_i^{(0)}(1)=a_i(1)=1$, $0\leq i\leq m-1$.
   The first equation  of \eqref{sys1pp} implies $\phi(z)= a_{\frac{m+1}{2}}(z)$ and, hence,  from Assumption \ref{a1}
\begin{equation}\label{eq1}
    1-z \phi(z^2) \;=\; 1-z a_{\frac{m+1}{2}}(z^2)\;=\;(z-1)^d \gamma(z), \qquad \gamma(z)\in \mathbb R[z,z^{-1}].
    \end{equation}
This equation sets the values attained by the function  $\phi(z)= a_{\frac{m+1}{2}}(z)$ and its derivatives  at the point 1.
    \begin{theorem}
      If $\phi(z)$ satisfies  \eqref{eq1} then it holds
      \[
      \left\{ \begin{array}{ccl}
        \phi(1)&=&1,\\ \\
        \phi^{(k)}(1)&=&(-1)^k\frac{(2k-1)!!}{2^{k}}, \qquad 1\leq k\leq d-1.
      \end{array}
      \right.
      \]
    \end{theorem}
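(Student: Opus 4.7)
The plan is to interpret the factorization in \eqref{eq1} as an order-of-vanishing statement at $z=1$ and then to identify $\phi$ near $w=1$ with the function $w\mapsto w^{-1/2}$. First, since $z$ is a unit in the local ring at $z=1$, I would divide \eqref{eq1} through by $z$ to obtain
$$
\phi(z^2)\;=\;\frac{1}{z}\;-\;\frac{(z-1)^d\gamma(z)}{z},
$$
so that the Laurent polynomial $\phi(z^2)-1/z$ has a zero of order at least $d$ at $z=1$. In other words, $\phi(z^2)$ agrees with $1/z$ to order $d-1$ in the Taylor expansion around $z=1$.

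Next, I would perform the change of variable $w=z^2$. Near $z=1$ the positive square-root branch $z=\sqrt{w}$ is an analytic local diffeomorphism with $\left.\tfrac{dz}{dw}\right|_{w=1}=1/2\neq 0$, and in particular $(z-1)^d=(\sqrt{w}-1)^d=O((w-1)^d)$ as $w\to1$. Under this substitution the previous display becomes $\phi(w)-w^{-1/2}=O((w-1)^d)$, so $\phi$ and the function $w\mapsto w^{-1/2}$ share the same Taylor polynomial of degree $d-1$ at $w=1$. Consequently, for every $k$ with $0\le k\le d-1$ one has
$$
\phi^{(k)}(1)\;=\;\left.\frac{d^k}{dw^k}\bigl(w^{-1/2}\bigr)\right|_{w=1}.
$$

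Finally, a direct computation gives
$$
\frac{d^k}{dw^k}\bigl(w^{-1/2}\bigr)\;=\;\prod_{j=0}^{k-1}\Bigl(-\tfrac{1}{2}-j\Bigr)\,w^{-1/2-k}\;=\;(-1)^k\,\frac{(2k-1)!!}{2^k}\,w^{-1/2-k},
$$
whose evaluation at $w=1$ yields exactly $(-1)^k(2k-1)!!/2^k$. Setting $k=0$ recovers $\phi(1)=1$, and for $1\le k\le d-1$ the claimed formula for $\phi^{(k)}(1)$ follows.

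The only mildly subtle point is the transfer of the vanishing order from the variable $z$ to the variable $w$, which rests on the fact that $z\mapsto z^2$ is a local biholomorphism near $z=1$. One could avoid the substitution altogether by differentiating the identity $1=z\phi(z^2)+(z-1)^d\gamma(z)$ at $z=1$ up to $d-1$ times, which yields $G^{(k)}(1)=(-1)^k k!$ for $G(z):=\phi(z^2)$ via the product rule, and then inverting the Fa\`a di Bruno relations to extract $\phi^{(k)}(1)$ inductively; however, the square-root substitution makes the final evaluation essentially automatic and is the approach I would pursue.
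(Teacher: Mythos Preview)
Your proposal is correct and follows essentially the same route as the paper: the paper substitutes $z=\sqrt{w}$ in \eqref{eq1} to obtain $\phi(w)-w^{-1/2}=(1-\sqrt{w})^d(-1)^{d+1}\gamma(\sqrt{w})/\sqrt{w}$ and then differentiates at $w=1$, which is precisely your change-of-variable argument. Your write-up is more explicit about why the vanishing order transfers under $w=z^2$ and about the final derivative computation, but the underlying idea is identical.
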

    \begin{proof}
      Substituting $z=\sqrt{w}$ in \eqref{eq1}, we get
      \[
      \phi(w)-w^{-1/2}\;=\;\frac{(1-\sqrt{w})^d (-1)^{d+1}\gamma(\sqrt{w})}{\sqrt{w}}.
      \]
The proof easily follows by differentiating  this relation at $w=z=1$. \ $\square$
\end{proof}
The remaining unknowns $a^{(s)}_i(1)$, $0\leq i\leq m-1$, $i\neq (m+1)/2$, $s=1,\ldots d-1$,  are computed
by solving the linear system obtained by differentiation
of \eqref{addit}. Specifically, by differentiating $s$ times the expression  of $a(z)$ in \eqref{dec}   with respect to the variable $z$  we find that
\begin{equation}\label{composder}
\scalebox{0.95}{$\displaystyle
a^{(s)}(z)=\sum_{i=0}^{m-1}\sum_{p=0}^s\frac{a_i^{(p)}(z^m)}{p!}\left(\sum_{j=max\{s-i,p\}}^s \binom{s}{j} A_{j,p}(z) \frac{i!}{(i-(s-j))!} z^{i-(s-j)}\right),
$}
\end{equation}
where $A_{j,p}(z)$ are polynomials defined by  Hoppe's formula for derivation of  composite function   according to
\[
A_{j,p}(z)\;=\;\sum_{\ell=0}^j\binom{p}{\ell}(-f(z))^{p-\ell} \frac{d^j}{dz^j}(f(z))^\ell, \qquad f(z)\;=\;z^m.
\]
If  $\xi_k=e^{2\pi \textrm{i} k/m}$, $1\leq k\leq m-1$, are the $m$-th roots of unity, then from \eqref{addit} it follows that $a^{(s)}(\xi_k)=0$,  $s=0,\ldots d-1$,
  $1\leq k\leq m-1$.  In the view of  \eqref{composder}  this implies that   the values $a^{(s)}_i(1)$, $0\leq i\leq m-1$, $i\neq (m+1)/2$, $s=1,\ldots d-1$, can be
  computed recursively  by solving
  \[
  \sum_{i=0}^{m-1}\sum_{p=0}^s\frac{a_i^{(p)}(1)}{p!}\left(\sum_{j=max\{s-i,p\}}^s \binom{s}{j} A_{j,p}(\xi_k) \frac{i!}{(i-(s-j))!} \xi_k^{i-(s-j)}\right)\;=\;0,
  \]
  with $1\leq k\leq m-1$.
 The system can be expressed  in matrix form as
\begin{equation} \label{eq:lin_sys_1}
	\scalebox{0.9}{$m^s\diag\left(\xi_1^{(m-1)s}, \ldots, \xi_{m-1}^{(m-1)s}\right) \mathcal V(\xi_1, \ldots, \xi_{m-1})\left[ a_0^{(s)}(1), \ldots, a_{m-1}^{(s)}(1)\right]^T\;=\;\B b_s,$}
\end{equation}
where
\begin{align*}
(\B b_s)_k& = -\sum_{i=0}^{m-1}\sum_{p=0}^{s-1}\frac{a_i^{(p)}(1)}{p!}\left(\sum_{j=max\{s-i,p\}}^s \binom{s}{j} A_{j,p}(\xi_k) \frac{i!}{(i-(s-j))!} \xi_k^{i-(s-j)}\right)\\ \\ 
&\qquad\qquad\qquad-\frac{a_{\frac{m+1}{2}}^{(s)}(1)}{s!} A_{s,s}(\xi_k) \xi_k^{\frac{m+1}{2}}, \qquad 1\leq k\leq m-1.
\end{align*}
Here $\diag(\B v)$,  $\B v=\left[v_1, \ldots, v_{m-1}\right]^T$, is the diagonal matrix with diagonal entries $v_k$, $1\leq k\leq m-1$,  and  $\mathcal V(\xi_1, \ldots, \xi_{m-1})$ is the Vandermonde matrix with nodes $\xi_k$, $1\leq k\leq m-1$.
Since $\xi_k$,  $1\leq k\leq m-1$, are distinct and non-zero,  the coefficient matrix is nonsingular and $a^{(s)}_i(1)$, $0\leq i\leq m-1$, $i\neq (m+1)/2$, are uniquely
determined.

Once the quantities $a^{(s)}_i(1)$, $0\leq i\leq m-1$, $s=0,\ldots d-1$,
are calculated then  the sub-symbols $a_i(z)$, $ 0\leq i\leq m-1$,  $ i\neq (m+1)/2$,  can be represented as follows
    \begin{equation}\label{eq:aiz_odd}
    a_i(z)\;=\;1 + \sum_{j=1}^{d-1}\frac{a^{(j)}_i(1)}{j!} (z-1)^j + (z-1)^d \widehat a_i(z) \;=\; \widecheck a_{i}(z) +(z-1)^d \widehat a_i(z),
    \end{equation}
    for suitable   $\widehat a_i(z)\in \mathbb R[z, z^{-1}]$.
    This representation is exploited in the second step to find  a solution of \eqref{dec11}.  Let us introduce the  truncated representation  $\widecheck a(z)$ of the symbol $a(z)$, that is,
    \[
   \widecheck a(z)\;=\; \sum_{i=0,\;  i\neq \frac{m+1}{2}}^{m-1}\widecheck a_{i}(z^m) z^i \;+\; a_{\frac{m+1}{2}}(z^m) z^{\frac{m+1}{2}}.
    \]
    First of all, we notice that
     in the view of \eqref{dec11}  the sub-symbols  of the function  $\phi(z)$  should  fulfil  the  compatibility relations  obtained by differentiating
    \eqref{dec11}  at the point $z=1$.
    Specifically,  by setting
    \[
    \theta(z)\;=\; 1-a_{\frac{m+1}{2}}(z) \phi_{\frac{m+1}{2}}(z)-\sum_{i=0,\; i\neq \frac{m+1}{2}}^{m-1} \widecheck a_{i}(z) \phi_i(z),
    \]
    we require that
    \[
    \theta^{(s)}(1)\;=\;0, \qquad 0\leq s\leq d-1.
    \]
The following result provides this compatibility for free.
    \begin{theorem}\label{theo2}
      If $\phi(z)= a_{\frac{m+1}{2}}(z)$ satisfies  \eqref{eq1},  then
      the function
      \[
      \theta(z)\;=\;1-a_{\frac{m+1}{2}}(z) \phi_{\frac{m+1}{2}}(z)-\sum_{i=0,\; i\neq \frac{m+1}{2}}^{m-1} \widecheck a_{i}(z) \phi_i(z)
      \]
      is such that
      $\theta^{(s)}(1)=0$ for  $s=0,\ldots d-1$.
    \end{theorem}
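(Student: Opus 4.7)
The plan is to reduce the vanishing of $\theta^{(s)}(1)$ for $s=0,\ldots,d-1$ to a vanishing property of $\widecheck{a}(z)$ at the nontrivial $m$-th roots of unity. Since $z\mapsto z^m$ is a local diffeomorphism at $z=1$ (its derivative is $m\neq 0$), it is equivalent to show that $\theta(z^m)=O((z-1)^d)$ as $z\to 1$. The key tool is the ``diagonal extraction'' identity
\[
\sum_{i=0}^{m-1} g_i(z^m)\, h_i(z^m) \;=\; \frac{1}{m}\sum_{k=0}^{m-1} g(\xi_k z)\, h(\xi_k z),
\]
valid for $g(z)=\sum_i g_i(z^m)z^i$ and $h(z)=\sum_i h_i(z^m)z^{-i}$; this follows at once from $\sum_k \xi_k^n$ equalling $m$ when $m\mid n$ and $0$ otherwise. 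Applied to $g=\widecheck{a}$ and $h=\phi$ (and recalling $\widecheck{a}_{(m+1)/2}=a_{(m+1)/2}=\phi$), it yields $\theta(z^m) = 1 - \frac{1}{m}\sum_{k=0}^{m-1}\widecheck{a}(\xi_k z)\,\phi(\xi_k z)$.

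Next I would show that $\widecheck{a}(\xi_k z)=O((z-1)^d)$ at $z=1$ for each $k=1,\ldots,m-1$, which isolates the $k=0$ term. By construction $\widecheck{a}_i^{(p)}(1)=a_i^{(p)}(1)$ for every $i$ and $0\le p\le d-1$, and the Hoppe formula \eqref{composder} evaluated at $z=\xi_k$ (so that $z^m=1$) shows that $\widecheck{a}^{(s)}(\xi_k)$ depends only on the values $\widecheck{a}_i^{(p)}(1)=a_i^{(p)}(1)$ with $p\le s$. Those values were determined precisely through the Vandermonde system \eqref{eq:lin_sys_1} so as to force $a^{(s)}(\xi_k)=0$ for $0\le s<d$ and $k\neq 0$, which is the analytic translation of the factor $((1+z+\cdots+z^{m-1})/m)^d$ in \eqref{addit}. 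Therefore $\widecheck{a}^{(s)}(\xi_k)=a^{(s)}(\xi_k)=0$ for $0\le s<d$ and $k\neq 0$, and the displayed identity reduces to
\[
\theta(z^m)\;=\;1-\frac{1}{m}\,\widecheck{a}(z)\,\phi(z)+O\bigl((z-1)^d\bigr).
\]

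The last step, which I expect to be the main technical obstacle, is to verify that $\widecheck{a}(z)\,\phi(z)=m+O((z-1)^d)$ near $z=1$. By Assumption \ref{a1} and the preceding theorem the Taylor expansion of $\phi$ at $z=1$ agrees with that of $z^{-1/2}$ to order $d-1$, so the task becomes $\widecheck{a}(z)=m\,z^{1/2}+O((z-1)^d)$. The summand with index $i=(m+1)/2$ already contributes $\phi(z^m)\,z^{(m+1)/2} = z^{-m/2}z^{(m+1)/2}+O((z-1)^d)=z^{1/2}+O((z-1)^d)$, so what must be checked is that $\sum_{i\neq(m+1)/2}\widecheck{a}_i(z^m)\,z^i$ equals $(m-1)z^{1/2}$ modulo $(z-1)^d$. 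The natural way to finish is order by order, using the recursive solution of \eqref{eq:lin_sys_1} together with the explicit values $\phi^{(k)}(1)=(-1)^k(2k-1)!!/2^k$ provided by the preceding theorem to match the Taylor coefficients of the two sides at $z=1$; a more conceptual alternative is to observe that $\widecheck{a}(z)=a(z)+O((z-1)^d)$ (the sub-symbols agree to order $d-1$ at $z=1$, and $(z^m-1)^d=(z-1)^d(1+z+\cdots+z^{m-1})^d$), so that the identity $a(z)\phi(z)=m+O((z-1)^d)$ obtained from the factorization \eqref{addit} via the same diagonal extraction can be transported back to $\widecheck{a}$.
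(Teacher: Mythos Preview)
Your roots-of-unity averaging approach is valid and genuinely different from the paper's. The paper introduces the auxiliary product $q(z)=z^{-(m+1)/2}\widecheck a(z^2)\,z\phi(z^2)$, replaces $z\phi(z^2)$ by $1-(z-1)^d\gamma(z)$ via \eqref{eq1}, uses the factorisation in \eqref{addit} (which, as both you and the paper note, $\widecheck a$ inherits) to convert $(z-1)^d$ into a $(1-z^m)^d$ factor, and then isolates the residue class $(m+1)/2\pmod m$ by comparing exponents in two separate expansions of $q$. Your diagonal-extraction identity performs that residue-class extraction in one stroke, and your killing of the $k\neq 0$ terms is the analytic counterpart of the paper's algebraic manipulation. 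What your route buys is a cleaner separation of the two ingredients (vanishing of $\widecheck a$ at the nontrivial $\xi_k$, and the Taylor behaviour of $\phi$ at $1$); the paper's route avoids having to control $\widecheck a(z)\phi(z)$ at $z=1$ separately, since the identity $z\phi(z^2)=1-(z-1)^d\gamma(z)$ does that work automatically.

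One caution about your ``conceptual alternative'' for the final step: you compare $\widecheck a$ with the symbol $a$ and invoke $a(z)\phi(z)=m+O((z-1)^d)$, but at this point no such $a$ has been constructed---the theorem is precisely the device that will make the Bezout equation for the corrections $\widehat a_i$ solvable---so the transport argument as stated is circular. The clean fix stays entirely within your own framework: apply the same averaging to the single sub-symbol of index $(m+1)/2$,
\[
\phi(z^m)\;=\;\widecheck a_{(m+1)/2}(z^m)\;=\;\frac{1}{m}\,z^{-(m+1)/2}\sum_{k=0}^{m-1}\xi_k^{-(m+1)/2}\,\widecheck a(\xi_k z),
\]
discard the $k\neq 0$ terms as before, and read off $\widecheck a(z)=m\,z^{(m+1)/2}\phi(z^m)+O((z-1)^d)$. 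Two applications of $\phi(w)=w^{-1/2}+O((w-1)^d)$ (once with $w=z^m$, once with $w=z$) then give $\widecheck a(z)\phi(z)=m+O((z-1)^d)$ directly, replacing the order-by-order verification.
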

    \begin{proof}
      Let us  consider the   auxiliary function $q(z)=z^{-\frac{m+1}{2}} \widecheck a(z^2) z \phi(z^2)$.  From \eqref{eq1}
      it follows that $q(z)=z^{-\frac{m+1}{2}} \widecheck a(z^2) - z^{-\frac{m+1}{2}} \widecheck a(z^2) (-1)^d(1-z)^d \gamma(z)$.  By construction   $\widecheck a(z)$  satisfies relations \eqref{addit}.
      By using the  representation of
      $\widecheck a(z)$ provided by \eqref{addit}   this     gives
      \[
      q(z)\;=\;z^{-\frac{m+1}{2}} \widecheck a(z^2) + \frac{(1-z^m)^d(1+z^m)^d}{(1+z)^d} \widehat \rho(z)
      \]
      with $\widehat \rho(z) \in \RR[z,z^{-1}]$.
      Observe that
      \[
      z^{-\frac{m+1}{2}} \widecheck a(z^2)\;=\;z^{\frac{m+1}{2}}a_{\frac{m+1}{2}}(z^{2m}) + \sum_{i=0,\;i\neq \frac{m+1}{2} }^{m-1}\widecheck a_{i}(z^{2m}) z^{2i-\frac{m+1}{2}},
      \]
      and, hence,
      \begin{equation}\label{new1}
      \scalebox{0.9}{$\displaystyle
       q(z)\;=\;z^{\frac{m+1}{2}}a_{\frac{m+1}{2}}(z^{2m}) + \sum_{i=0,\;i\neq \frac{m+1}{2} }^{m-1}\widecheck a_{i}(z^{2m}) z^{2i-\frac{m+1}{2}} + \frac{(1-z^m)^d(1+z^m)^d}{(1+z)^d} \widehat \rho(z).
      $}
      \end{equation}
     Moreover  it can be easily seen that  the two sets $[0,m-1]\cap \mathbb N$ and  $\{n\in \mathbb N\colon n= 2i-(m+1)/2 \pmod{m}, 0\leq i\leq m-1\}$
     coincide. Besides this,  by direct multiplication of $a(z^2)$ and $\phi(z^2)$, we can write
     \begin{equation}\label{new2}
     \begin{split}
         q(z)\;&=\;z^{\frac{1-m}{2}}\left(\sum_{i=0,\;i\neq \frac{m+1}{2} }^{m-1} \widecheck a_{i}(z^{2m}) \phi_i(z^{2m}) +a_{\frac{m+1}{2}}(z^{2m}) \phi_{\frac{m+1}{2}}(z^{2m})\right) +\\ \\
         &\qquad\qquad\qquad+
         z^{\frac{1-m}{2}}\sum_{i\neq j,\; 0\leq i,j\leq  m-1}z^{2(i-j)}\eta_{i,j}(z^{2m}),
     \end{split}
     \end{equation}
     for suitable Laurent polynomials $\eta_{i,j}(z)\in \RR[z,z^{-1}]$.
     Since  $(1-m)/2\equiv (m+1)/2 \pmod{m}$  the class of integers congruent to $(1-m)/2$  modulo $m$ is $\{n\in \mathbb Z\colon n= (1-m)/2+ \ell m, \ell \in \mathbb Z\}$.  It  follows that  $n=(1-m)/2 +2(i-j)$, $i\neq j, 0\leq i,j\leq  m-1$,
     is such that $n \not\equiv (1-m)/2 \pmod{m}$.  Hence,  by  comparison of classes mod $m$ in \eqref{new1} and \eqref{new2},  we obtain that
     \[
     \begin{split}
       z^ma_{\frac{m+1}{2}}(z^{2m})\;&=\;\sum_{i=0,\;i\neq \frac{m+1}{2}  }^{m-1} \widecheck a_{i}(z^{2m}) \phi_i(z^{2m}) + a_{\frac{m+1}{2}}(z^{2m}) \phi_{\frac{m+1}{2}}(z^{2m})+
       \\ \\&\qquad\qquad\qquad+(1-z^m)^d \widetilde \rho(z), \qquad \widetilde \rho(z) \in \RR[z,z^{-1}].
     \end{split}
     \]
      From \eqref{eq1} this implies that
     \[
     \sum_{i=0,i\neq \frac{m+1}{2}  }^{m-1} \widecheck a_{i}(z^{2}) \phi_i(z^{2}) +a_{\frac{m+1}{2}}(z^{2}) \phi_{\frac{m+1}{2}}(z^{2}) \;=\;1+(1-z)^d\rho(z), \qquad \rho(z) \in \RR[z,z^{-1}]
     \]
     which concludes the proof. \ $\square$ \
    \end{proof}

    By setting
    \begin{equation}\label{eq:thetaz}
    	\theta(z)\;=\;(z-1)^d \widehat \theta(z), \qquad \widehat \theta(z) \in \mathbb R[z,z^{-1}].
    \end{equation}
    from Theorem \ref{theo2}
   it follows that the polynomial corrections $\widehat a_i(z)$, $0\leq i\leq m-1,$  $i\neq (m+1)/2$,   satisfy the Bezout equation
    \begin{equation}\label{bezz1}
      \widehat \theta(z)\;=\;\sum_{i=0,\; i\neq \frac{m+1}{2}}^{m-1}\widehat a_i(z)  \phi_i(z).
    \end{equation}
     Under  Assumption \ref{a2}
     this polynomial equation is solvable   \cite{GS}. In particular,  following  \cite{GS}  every solution of \eqref{bezz1}
     can be written as
     \[
     \widehat a_i(z)\;=\;\widetilde a_i(z)  +\sum_{j=i+1,\;j\neq \frac{m+1}{2}}^{m-1}H_{i,j}(z)\phi_j(z) -\sum_{j=0,\; j\neq \frac{m+1}{2}}^{i-1}H_{j,i}(z)\phi_j(z),
     \]
     where
     \[
     \widehat \theta(z)\;=\;\sum_{i=0,\; i\neq \frac{m+1}{2}}^{m-1}\widetilde  a_i(z)  \phi_i(z)
     \]
     and $H_{i,j}(z)$  is any element of $\mathbb R[z,z^{-1}]$. This general form of the solution can be exploited whenever we look for masks $a(z)$ with additional properties.
     Of great importance for applications is the case where $a(z)$ is required to be symmetric, that is, $a(z)= za(z^{-1})$.  The existence of a symmetric solution can be proved under the auxiliary assumption that 	$\varphi\left(1/2+\ell\right)=\varphi\left(-1/2-\ell\right)$, $\ell \in \mathbb N\cup\{0\}$. 
     In this case   from  \eqref{sys1p}  we  obtain that  $a(z)$ is a solution if and only if $za(z^{-1})$  is a solution, too. By linearity  this implies that $(a(z)+za(z^{-1}))/2$  also  determines a symmetric solution.    If this solution is not of minimal length one can exploit the general form above to   further
       compress the  representation.

     \begin{example}
       	Let us illustrate  our composite approach for the odd case  by means of a computational example. We choose $m=3$, $d=6$ and
       	\[
       		\varphi\left(\frac{1}{2}+\ell\right)\;=\;\left\{\begin{array}{cl}
       			\frac{3}{256}, & \textrm{if } \ell\in\{-3,2\},\\ \\
       			-\frac{25}{256}, & \textrm{if } \ell\in\{-2,1\},\\ \\
       			\frac{75}{128}, & \textrm{if } \ell\in\{-1,0\},\\ \\
       			0, & \textrm{ otherwise.}	
       		\end{array}		\right.
       	\]
       	Thus, according to \eqref{dec1},
       	\[\begin{array}{rcl}
       		\phi(z)&=&\frac{3}{256\,z^3}\;-\;\frac{25}{256\,z^2}\;+\;\frac{75}{128\,z}\;+\;\frac{75}{128}\;-\;\frac{25\,z}{256}\;+\;\frac{3\,z^2}{256} \\ \\
       		&=&\phi_0(z^3)+\phi_1(z^3)z^{-1}+\phi_2(z^3)z^{-2},
       	\end{array}\]
        with
       	\[
       		\phi_0(z)\;=\; \frac{3}{256\,z}\;+\;\frac{75}{128},\qquad
       		\phi_1(z)\;=\;\frac{75}{128}\;+\;\frac{3\,z}{256},\qquad
       		\phi_2(z)\;=\;-\frac{25}{256}\;-\;\frac{25\,z}{256}.
       	\]
       	After solving the linear system \eqref{eq:lin_sys_1}, we have from \eqref{eq:aiz_odd}
       	\[
 			a_0(z)\;=\;\widecheck{a}_0(z)+(z-1)^6\widehat{a}_0(z),\qquad a_1(z)\;=\;\widecheck{a}_1(z)+(z-1)^6\widehat{a}_1(z)
 		\]
 		with
 		\[
 			\widecheck{a}_0(z)\;=\;1+\frac{(z-1)}{6}-\frac{5\,{\left(z-1\right)}^2}{72}+\frac{55\,{\left(z-1\right)}^3}{1296}-\frac{935\,{\left(z-1\right)}^4}{31104}+\frac{4301\,{\left(z-1\right)}^5}{186624},
 		\]
 		\[
 			\widecheck{a}_1(z)\;=\;1-\frac{(z-1)}{6}+\frac{7\,{\left(z-1\right)}^2}{72}-\frac{91\,{\left(z-1\right)}^3}{1296}+\frac{1729\,{\left(z-1\right)}^4}{31104}-\frac{8645\,{\left(z-1\right)}^5}{186624},
 		\]
 		and
 		\[ 	
 			a_2(z)\;=\;\phi(z).	
       	\]
       	To search for compatible $\widehat{a}_0(z)$ and $\widehat{a}_1(z)$, we first compute
       	\[
       		\widehat{\theta}(z)\;=\;\frac{8645\,z^3+215471\,z^2-24300\,z+18225}{15925248\,z^3}
       	\]
       	in such a way that \eqref{eq:thetaz} holds, i.e.,
       	\[
       		(z-1)^6\widehat{\theta}(z)\;=\;1\;-\;a_2(z)\;\phi_2(z)\;-\;\sum_{i=0}^{1}\;\widecheck{a}_i(z)\;\phi_i(z).
       	\]
       	Then we look for particular solutions $\widetilde{a}_0(z)$ and $\widetilde{a}_1(z)$ such that
       	\[
       		\widehat{\theta}(z)\;=\;\widetilde{a}_0(z)\;\phi_0(z)\;+\;\widetilde{a}_1(z)\;\phi_1(z).
       	\]
       	A possible choice is
       	\[
       		\widetilde{a}_0(z)\;=\; -\frac{9903400\,z-45544275}{466373376\,z^2},
       	\]
       	\[
       	 	\widetilde{a}_1(z)\;=\;\frac{21603855\,z-46560721}{466373376\,z^2}.
       	        \]
        After symmetrization, the resulting mask is such that $a_i=0$ for $i\notin [-14,15]$.\\
       	To obtain a  smaller symmetric mask, we search for a suitable $H_{0,1}(z)$ so that replacing
       	\[
       		\widehat{a}_0(z)\;=\;\widetilde{a}_0(z)\;+\;H_{0,1}(z)\;\phi_1(z),
       	\]
       	\[
       		\widehat{a}_1(z)\;=\;\widetilde{a}_1(z)\;-\;H_{0,1}(z)\;\phi_0(z),
       	\]
        in the previous expressions of ${a}_0(z)$ and ${a}_1(z)$, leads to the final symbol
       	\[
       		a(z)\;=\;a_0(z^3)\;+\;a_1(z^3)\;z\;+\;a_2(z^3)\;z^2,
       	\]
       	satisfying $a(z)=za(z^{-1})$. The choice of $H_{0,1}(z)$ that leads to the shortest mask is
       	\[
       		H_{0,1}(z)\;=\;-\;\frac{16567}{5465313\;z^3}\;-\;\frac{844799}{5465313\;z^2},
       	\]
        and the first half of the resulting symmetric mask $\ba$ is
       	\begin{equation}\label{eq:m3_d6_6pt}
       		\begin{array}{c}
       		\displaystyle	\left \{\; \frac{16567}{466373376},\; 0,\; -\frac{414175}{233186688},\; \frac{224821}{66624768},\; \frac{3}{256},\; \frac{589847}{33312384},\right.\\ \\
       \displaystyle \left.-\frac{83995}{2776032},\; -\frac{25}{256},\; -\frac{2042857}{22208256},\; \frac{1290971}{8328096},\; \frac{75}{128},\; \frac{63152905}{66624768}\;\right \}.
       		\end{array}
       	\end{equation}
       	The basic limit function $\varphi$ related to the mask in \eqref{eq:m3_d6_6pt} is shown in Figure \ref{fig:m3_d6_6pt}, and two examples of interpolating curves can be found in Figure \ref{fig:m3_d6_6pt_interp}. We have that $\textrm{supp}(\varphi)=[-23/4,23/4]$ and, via joint spectral radius techniques \cite{MR3886713,MR3009529,THOMAS2}, one can prove that $\varphi\in\mathcal{C}^{3.0065}(\mathbb{R})$. By construction the corresponding subdivision scheme reproduces polynomials of degree $5$. On the other hand the primal interpolating ternary $6$-point scheme (see, e.g., \cite{MR3702925}) reproduces quintic polynomials as well but it has a $\mathcal{C}^{2.8300}(\mathbb{R})$ basic limit function supported in $[-4,4]$.
     \end{example}

	\begin{figure}[h!]
		\centering
		\includegraphics[scale=0.75]{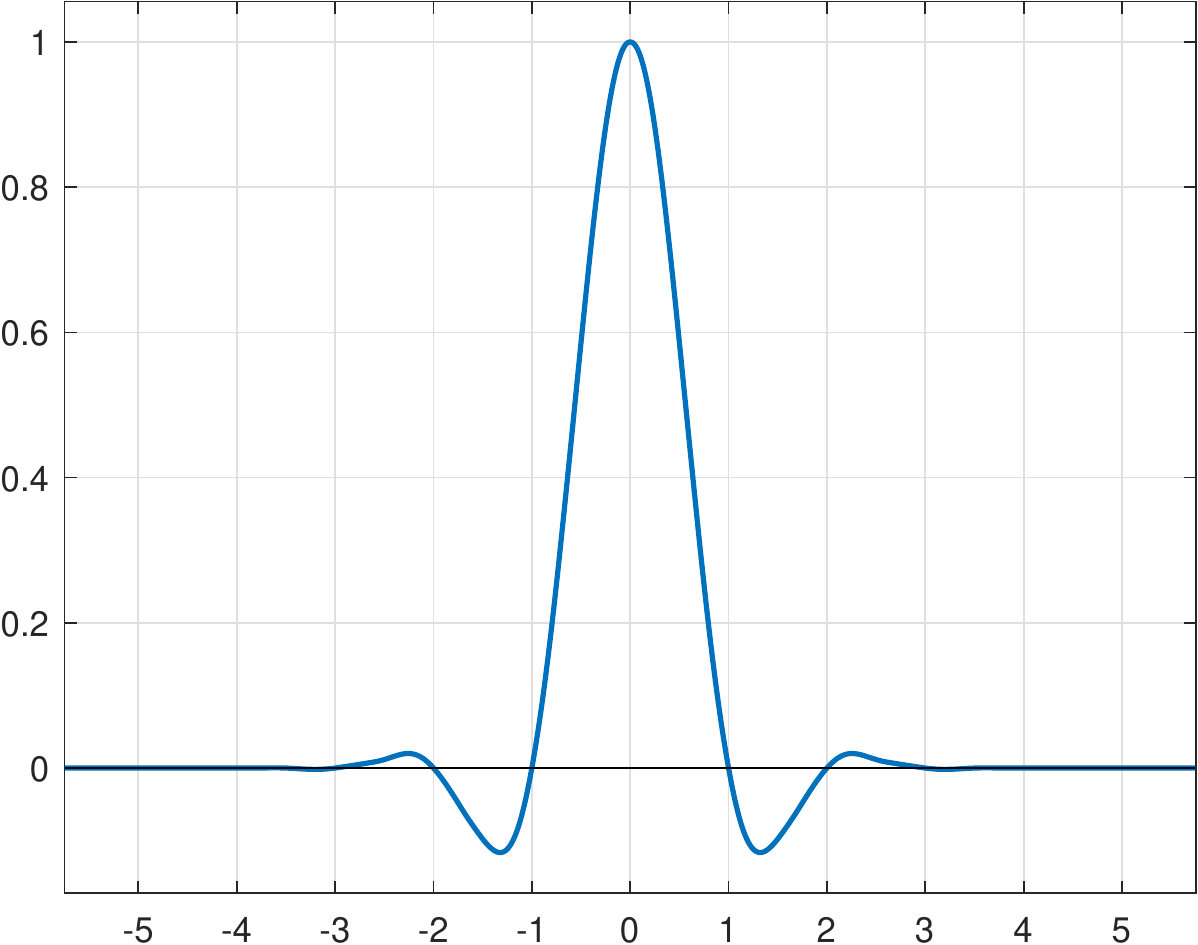}
		\caption{The graph of the basic limit function $\varphi$ related to the mask in \eqref{eq:m3_d6_6pt}.}
		\label{fig:m3_d6_6pt}
	\end{figure}

\bigskip
\bigskip

	\begin{figure}[h!]
		\centering
		\begin{minipage}{0.49\textwidth}
			\includegraphics[scale=0.5]{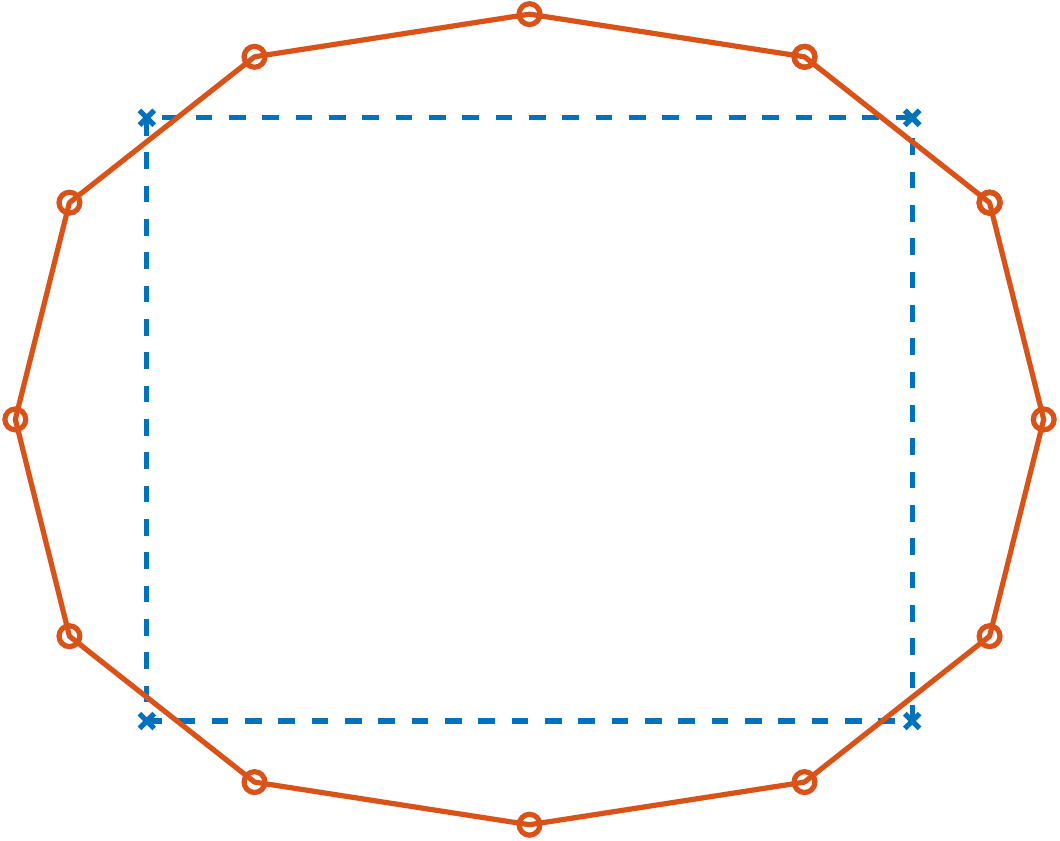}
		\end{minipage}
		\begin{minipage}{0.49\textwidth}
			\includegraphics[scale=0.5]{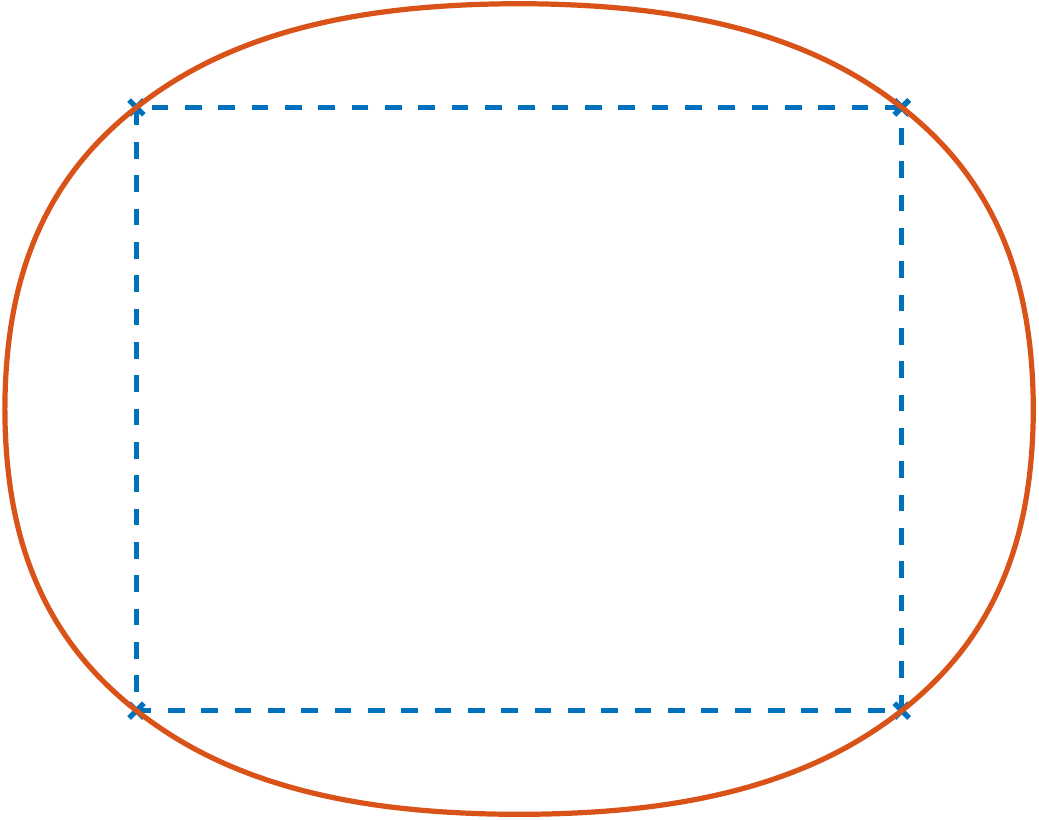}
		\end{minipage} \\ $ $\\
		\begin{minipage}{0.49\textwidth}
			\includegraphics[scale=0.5]{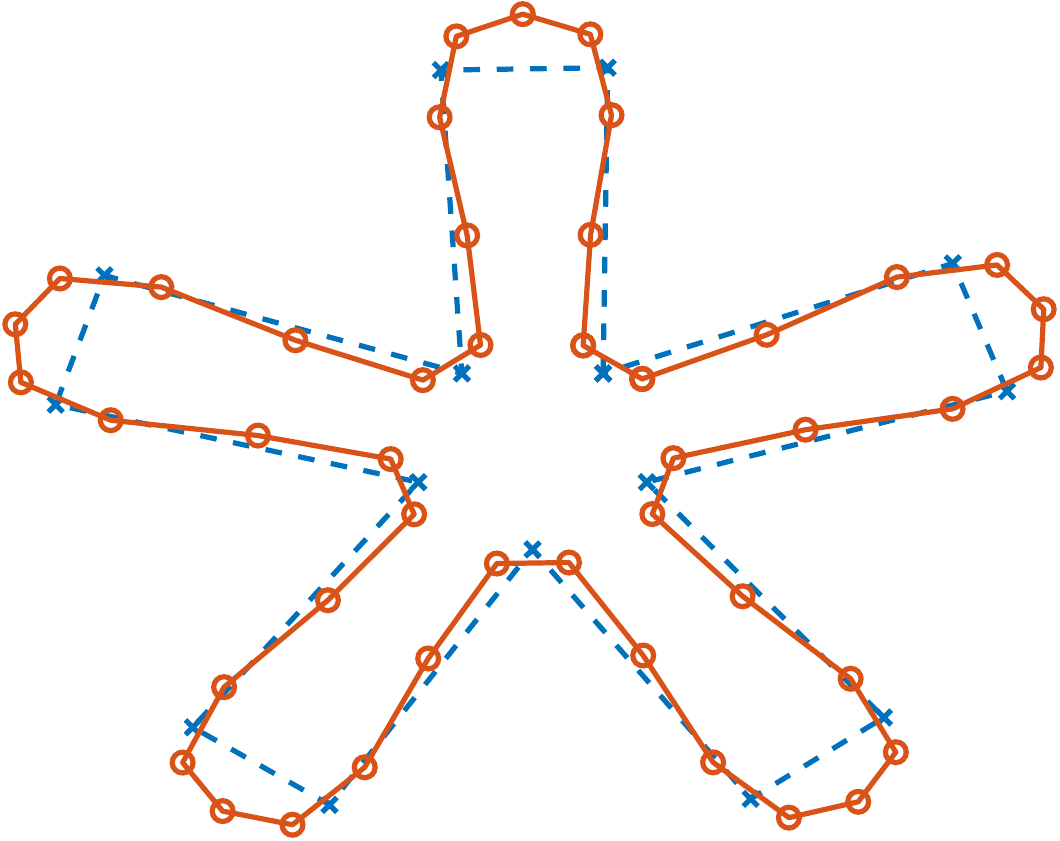}
		\end{minipage}
		\begin{minipage}{0.49\textwidth}
			\includegraphics[scale=0.5]{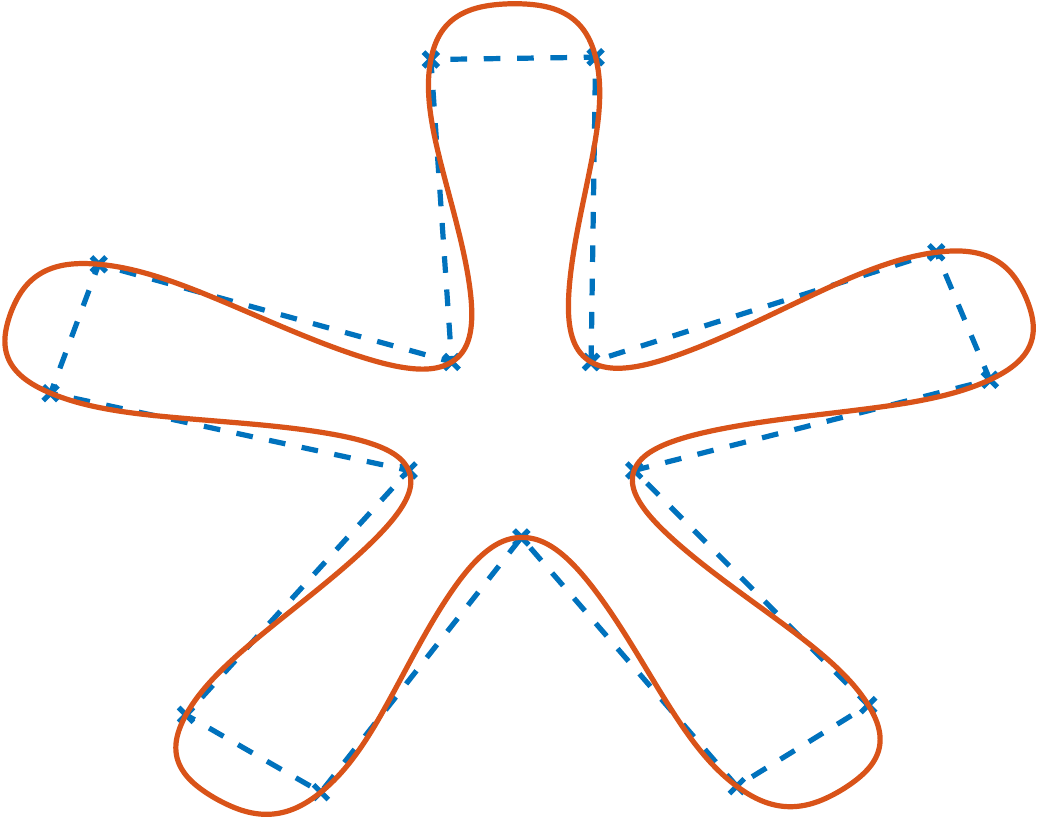}
		\end{minipage}
		\caption{Two examples of interpolating curves given by the subdivision scheme associated to the mask in \eqref{eq:m3_d6_6pt}. On the left, the first level of subdivision starting with the dotted control polygons; on the right, the corresponding interpolating limit curves.}
		\label{fig:m3_d6_6pt_interp}
	\end{figure}

\subsection{The even arity case}
Let us now consider the solution of the linear system $(35)$ in \cite{RV} for the case where
$m=2\ell$ is an even integer.   The system is defined as follows:
\begin{equation}\label{syse}
  M \B a\;=\;\B c, \qquad M\;=\;(\mu_{i,j})_{i,j\in \mathbb Z}, \qquad  \B c \;=\;(c_i)_{i\in \mathbb Z}
  \end{equation}
  where
  \[
  \mu_{i,j}\;=\;\left\{\begin{array}{cl}
  \varphi \left(\displaystyle\frac{i+1}{2}-j \right), & \mbox{ if } i\in m\mathbb Z,\\ \\
  0, & \mbox{ otherwise,}
  \end{array}\right. \qquad c_i\;=\;\left\{\begin{array}{cl}
  1,  & \mbox{ if } i=0,\\ \\
  \varphi \left(\displaystyle\frac{i}{2m} \right),  & \mbox{ if } i\in m(2\mathbb Z+1),\\ \\
   0, & \mbox{ otherwise,}
  \end{array}\right.
  \]
  and $\varphi\colon (2\mathbb Z+1)/2 \rightarrow \mathbb R$ is  a given fixed function. By  suppressing zero rows in
  both $M$ and $\B c$ we obtain  the equivalent linear system
 \begin{equation}\label{sys1}
   \widehat M \B a\;=\;\widehat {\B c}, \qquad
   \widehat M\;=\;(\widehat \mu_{i,j})_{i,j\in \mathbb Z}, \qquad  \widehat {\B c} \;=\;(\widehat c_i)_{i\in \mathbb Z}
  \end{equation}
  where
  \[
  \widehat  \mu_{i,j}\;=\;
  \varphi \left(\displaystyle\frac{im+1}{2}-j \right), \quad i,j\in \mathbb Z,  \qquad \widehat c_i\;=\;\left\{\begin{array}{cl}
  1,  & \mbox{ if } i=0,\\ \\
  \varphi \left(\displaystyle\frac{i}{2} \right), & \mbox{ if }  \mod(i,2)=1,\\ \\
   0, & \mbox{otherwise.}
  \end{array}\right.
  \]
  According to \cite{RV},  \eqref{syse} and \eqref{sys1} can be  expressed in functional form as
  \[
  \sum_{\alpha \in m \mathbb Z} \sum_{\beta \in \mathbb Z} a_{\beta} \varphi \left(\frac{\alpha +1}{2}-\beta \right) z^\alpha\;=\;1 + \sum_{\alpha\in m(2 \mathbb Z+1)}\varphi \left(\frac{\alpha}{2m} \right) z^\alpha
  \]
 which can  be rewritten as
  \begin{equation}\label{eqqq}
  \begin{array}{rcl}
    \sum_{\ell \in \mathbb Z} \sum_{\beta \in \mathbb Z} a_{\beta} \varphi \left(\frac{m \ell +1}{2}-\beta \right) z^\ell&=&1 + \sum_{\ell\in \mathbb Z}\varphi \left(\frac{2\ell+1}{2} \right) z^{2\ell +1}\\ \\
    &=& 1 + \sum_{\ell\in \mathbb Z}\varphi(\ell +\frac{1}{2})z^{2\ell +1} .
  \end{array}
  \end{equation}
  By  Assumption  \ref{a1} the right-hand  side  of \eqref{eqqq}    satisfies
  \[
  \begin{array}{rcl}
  1 + \displaystyle \sum_{\ell\in \mathbb Z}\varphi \left(\ell +\frac{1}{2} \right) z^{2\ell +1}&=&1 + z \phi(z^2)\\
  &=&(z+1)^d(-1)^d\gamma(-z)\\ \\
  &=&(z+1)^d \widetilde \gamma(z), \smallskip \qquad \widetilde \gamma(z)\in \mathbb R[z,z^{-1}].
  \end{array}
  \]
  Concerning the  representation of the left-hand side  of \eqref{eqqq}   let us introduce  the modified subsymbols  defined by
  \begin{equation} \label{eq:phi_hat}
  	\widehat \phi_\ell(z) \;=\;\sum_{i\in \mathbb Z} \varphi \left(\frac{m i +1}{2}-\ell \right) z^i, \qquad 0\leq \ell \leq m-1.
  \end{equation}
  Notice that if $\phi_\ell(z)$, $0\leq \ell \leq m/2-1$,  denote the subsymbols of the mask of arity $m/2$ then  we have
  \begin{equation}\label{halfrel}
  \widehat \phi_\ell(z)\;=\;\phi_\ell(z), \qquad \widehat \phi_{\ell+m/2}(z)=z \widehat \phi_\ell(z), \qquad 0\leq \ell \leq m/2-1.
  \end{equation}
      In particular this implies that
        \[
        \widehat \phi_{\ell+m/2}(-1)\;=\;- \widehat \phi_\ell(-1), \qquad \widehat \phi_{\ell+m/2}(1)\;=\; \widehat \phi_\ell(1),  \qquad 0\leq \ell \leq m/2-1.
        \]
        Moreover from    $1 + z \phi(z^2) =  (z+1)^d \widetilde \gamma(z)$ and  $1 - z \phi(z^2) =  (z-1)^d  \gamma(z)$  one  deduces  that
        \begin{equation}\label{signnew1}
          (z+1)^d \widetilde \gamma(z)\;=\;2- (z-1)^d  \gamma(z).
        \end{equation}
  Then  for the left-hand side   of   \eqref{eqqq}  it holds
  \[
  \sum_{\ell \in \mathbb Z} \sum_{\beta \in \mathbb Z} a_{\beta} \varphi \left(\frac{m \ell +1}{2}-\beta \right) z^\ell\;=\;a_0(z^2) \widehat \phi_0(z)+\ldots +a_{m-1}(z^2) \widehat \phi_{m-1}(z).
  \]
  Hence, it follows that  relation \eqref{eqqq} can be reformulated  as  the Bezout-like polynomial equation
  \begin{equation}\label{bezeqqq}
    a_0(z^2) \widehat \phi_0(z)+\ldots +a_{m-1}(z^2) \widehat \phi_{m-1}(z)\;=\;  (z+1)^d\widetilde \gamma(z).
  \end{equation}
  From \eqref{halfrel}  it follows that the equation \eqref{bezeqqq} can be rewritten in a  more customary form  as
  \[
  (a_0(z^2)  + z a_{m/2}(z^2)) \phi_0(z) +\ldots+(a_{m/2-1}(z^2)  + z a_{m-1}(z^2))\phi_{m/2-1}(z)=(z+1)^d\widetilde \gamma(z).
  \]

  Hereafter,  let us assume that Assumption \ref{a2} holds for the subsymbols
  $\phi_\ell(z)$  of the mask of arity $m/2$.
  We also make the following further assumption.
  \begin{assumption}\label{a3}  It is assumed that
     \[
    \phi_0(1)\;=\; \phi_1(1)\;=\;\ldots\;=\;\phi_{m/2-1}(1)\;=\;2/m.
     \]
     \end{assumption}
 \begin{remark}
 	Note that Assumption \ref{a3} is equivalent to impose the matrix $M$ in \eqref{syse} to have eigenvalue $2/m$ with corresponding left-eigenvector $\mathbf{1}$. This ensures that $\sum_{k\in\mathbb{Z}}a_k=m$, which is a necessary condition for the resulting subdivision scheme to be convergent.
 \end{remark}
 Then the solution of  equation  \eqref{bezeqqq} can be found similarly with the odd case.
 Specifically, at the first step  the  unknowns $a^{(s)}_i(1)$, $0\leq i\leq m-1$, $s=1,\ldots d-1$,  are computed
 by solving  a Vandermonde linear system.
The system is formed as follows.  The first $m-1$  equations are
 obtained by differentiation
 of  \eqref{addit} complemented with   relation \eqref{bezeqqq}. The last equation is  found by imposing the property \eqref{signnew1}  on the
 left hand-side of \eqref{bezeqqq}.
If  $\xi_k=e^{2\pi \textrm{i} k/m}$, $1\leq k\leq m$, denote the $m$-th roots of unity  then the system is  of the form
\begin{equation} \label{eq:lin_sys_2}
	\scalebox{0.9}{$m^s\diag\left(\xi_1^{(m-1)s}, \ldots, \xi_{m-1}^{(m-1)s}, (2/m)^{s+1}\right) \mathcal V(\xi_1, \ldots \xi_{m} )\left[ a^{(s)}_0(1), \ldots, a^{(s)}_{m-1}(1)\right]^T\;=\;\B b_s,$}
\end{equation}
where $\diag(\B v)$ is the diagonal matrix with diagonal entries $v_k$ and  $\mathcal V(\xi_1, \ldots, \xi_{m-1},\xi_{m})$ is the Vandermonde matrix with nodes $\xi_k$.   The structure of the
last row follows from  Assumption \ref{a3}.
Since $\xi_k$,  $1\leq k\leq m$, are distinct and non-zero,  the coefficient matrix is nonsingular and $a^{(s)}_i(1)$, $0\leq i\leq m-1$, are uniquely
determined.
Once these quantities are determined
then  the sub-symbols can be represented as follows
    \begin{equation} \label{eq:a_cut}
    \begin{array}{c}
    	a_i(z)\;=\;\widecheck{a}_i(z) + (z-1)^d \widehat a_i(z), \qquad  0\leq i\leq m-1,\\ \\
    	\displaystyle \widecheck{a}_i(z)\;=\;1+ \sum_{j=1}^{d-1}\frac{a^{(j)}_i(1)}{j!} (z-1)^j,
    \end{array}
    \end{equation}
    for suitable   $\widehat a_i(z)\in \mathbb R[z, z^{-1}]$.
   This representation is exploited in the second step to find  a solution of \eqref{bezeqqq}.  If we set
     \begin{equation} \label{eq:theta_even}
     	\theta(z)\;=\;(z+1)^d\widetilde \gamma(z)-\sum_{j=0}^{m-1} \widecheck{a}_i(z^2) \widehat \phi_j(z),
     \end{equation}
   by using  similar arguments as in the proof of Theorem  \ref{theo2}, it is shown that
     \begin{equation} \label{eq:theta_hat_even}
     \theta(z)\;=\;(z^2-1)^d \widehat \theta(z), \qquad \widehat \theta(z)\in \mathbb R[z,z^{-1}].
     \end{equation}
In this  way  equation \eqref{bezeqqq}  can be  simplified as follows
     \[
     \widehat a_0(z^2) \widehat \phi_0(z)+\ldots +\widehat a_{m-1}(z^2) \widehat \phi_{m-1}(z)\;=\;  \widehat \theta(z).
     \]
     which yields the  reduced analogue of \eqref{halfrel}
     \[
     (\widehat a_0(z^2)  + z \widehat a_{m/2}(z^2)) \phi_0(z) +\ldots+(\widehat a_{m/2-1}(z^2)  + z \widehat a_{m-1}(z^2))\phi_{m/2-1}(z)\;=\;\widehat \theta(z).
     \]
     By setting $\widetilde a_i(z)=\widehat a_i(z^2)  + z \widehat a_{i+m/2}(z^2)$, $0\leq i\leq m/2-1$,  we deduce  that the equation
     \begin{equation}\label{lasteq}
     	\widetilde a_0(z)  \phi_0(z) +\ldots+ \widetilde a_{m/2-1}(z)\phi_{m/2-1}(z)\;=\;\widehat \theta(z)
     \end{equation}
     is solvable and  every solution can be written as
     \[
     \overline a_i(z)\;=\;\widetilde a_i(z)  +\sum_{j=i+1}^{m/2-1}H_{i,j}(z)\phi_j(z) -\sum_{j=0}^{i-1}H_{j,i}(z)\phi_j(z),
     \]
     where $\widetilde a_i(z)$  satisfy  \eqref{lasteq}
     and $H_{i,j}(z)$  is any element of $\mathbb R[z,z^{-1}]$.

     Similarly with the odd case it can be shown that  $a(z)$ is a solution if and only if $za(z^{-1})$  is a solution, too. By linearity  this implies that
     $(a(z)+za(z^{-1}))/2$  also  determines a symmetric solution. If this solution is not of minimal length one can exploit the general form above to further
     compress the  representation.
	\begin{remark}
		  For $m=2$  equation \eqref{bezeqqq}  becomes
                  \[
                  (a_0(z^2)  + z a_{1}(z^2)) \phi(z)\;=\;1+z  \phi(z^2)
                  \]
                  which implies  that the first and the last non-zero elements of $a(z)$ must be equal to 1. There follows that the associated subdivision scheme can not be convergent \cite{RV}.
	\end{remark}
     \begin{example}
       	Let us illustrate  our composite approach  for the even  case  by means of a computational example.
		We choose $m=4$, $d=6$ and again
		\[
			\varphi\left(\frac{1}{2}+\ell\right)\;=\;\left\{\begin{array}{cl}
				\frac{3}{256}, & \textrm{if } \ell\in\{-3,2\},\\ \\
				-\frac{25}{256}, & \textrm{if } \ell\in\{-2,1\},\\ \\
				\frac{75}{128}, & \textrm{if } \ell\in\{-1,0\},\\ \\
				0, & \textrm{ otherwise.}	
			\end{array}		\right.
		\]	
		Thus, in view of \eqref{eq:phi_hat} and \eqref{halfrel},
		\[
			\widehat{\phi}_0(z)\;=\;\phi_0(z)\;=\;-\;\frac{25}{256\,z}\;+\;\frac{75}{128}\;+\;\frac{3\,z}{256},
		\]
		\[
			\widehat{\phi}_1(z)\;=\;\phi_1(z)\;=\;\frac{3}{256\,z}\;+\;\frac{75}{128}\;-\;\frac{25\,z}{256},
		\]
		\[
			\widehat{\phi}_2(z)\;=\;z\phi_0(z)\;=\;-\;\frac{25}{256}\;+\;\frac{75\,z}{128}\;+\;\frac{3\,z^2}{256},
		\]
		\[
			\widehat{\phi}_3(z)\;=\;z\phi_1(z)\;=\;\frac{3}{256}\;+\;\frac{75\,z}{128}\;-\;\frac{25\,z^2}{256}.
		\]
		After solving the linear system \eqref{eq:lin_sys_2}, from \eqref{eq:a_cut} we obtain
		\[
			a_i(z)\;=\;\widecheck{a}_i(z) + (z-1)^d \widehat a_i(z), \qquad  0\leq i\leq 3,
		\]
		with
		\[
			\widecheck a_0(z)\;=\;1+\frac{\left(z-1\right)}{8}-\frac{7\,{\left(z-1\right)}^2}{128}+\frac{35\,{\left(z-1\right)}^3}{1024}-\frac{805\,{\left(z-1\right)}^4}{32768}+\frac{4991\,{\left(z-1\right)}^5}{262144},
		\]
		\[
			\widecheck a_1(z)\;=\;1-\frac{\left(z-1\right)}{8}+\frac{9\,{\left(z-1\right)}^2}{128}-\frac{51\,{\left(z-1\right)}^3}{1024}+\frac{1275\,{\left(z-1\right)}^4}{32768}-\frac{8415\,{\left(z-1\right)}^5}{262144},
		\]
		\[
			 \widecheck a_2(z)\;=\;1-\frac{3\,\left(z-1\right)}{8}+\frac{33\,{\left(z-1\right)}^2}{128}-\frac{209\,{\left(z-1\right)}^3}{1024}+\frac{5643\,{\left(z-1\right)}^4}{32768}-\frac{39501\,{\left(z-1\right)}^5}{262144},
		\] 	
		\[
			\widecheck a_3(z)\;=\;1-\frac{5\,\left(z-1\right)}{8}+\frac{65\,{\left(z-1\right)}^2}{128}-\frac{455\,{\left(z-1\right)}^3}{1024}+\frac{13195\,{\left(z-1\right)}^4}{32768}-\frac{97643\,{\left(z-1\right)}^5}{262144}.
		\]
		To search for compatible $\widehat{a}_0(z)$, $\widehat{a}_1(z)$, $\widehat{a}_2(z)$ and $\widehat{a}_3(z)$, we first compute
		\[
			\widehat{\theta}(z)\;=\;\frac{3}{256\,z^5}\;-\;\frac{7}{256\,z^3}\;+\;\frac{5086563}{16777216\,z}\;-\;\frac{580643}{16777216}
		\]
		such that, according to \eqref{eq:theta_even} and \eqref{eq:theta_hat_even},
		\[
			(z^2-1)^6\widehat{\theta}(z)\;=\;(z+1)^6\;\widetilde{\gamma}(z)\;-\;\sum_{i=0}^{3}\;\widecheck{a}_i(z^2)\;\widehat{\phi}_i(z),
		\]
		with
		\[
			\widetilde{\gamma}(z)\;=\;\frac{3}{256\,z^5}-\frac{9}{128\,z^4}+\frac{19}{128\,z^3}-\frac{9}{128\,z^2}+\frac{3}{256\,z},
		\]
		due to \eqref{signnew1}. Then we search for $\widetilde{a}_0(z)$ and $\widetilde{a}_1(z)$ that solve the reduced Bezout equation in \eqref{lasteq},
		\begin{equation} \label{eq:red_Bez}
			\widehat{\theta}(z)\;=\;\widetilde{a}_0(z)\;{\phi}_0(z)\;+\;\widetilde{a}_1(z)\;{\phi}_1(z).
		\end{equation}
		A possible choice is
		\[
			\widetilde{a}_0(z)\;=\;\frac{2126507351527}{157810688\,z}-\frac{176620228675}{78905344},
		\]
		\[ 	
		 	\widetilde{a}_1(z)\;=\;\frac{1}{z^4}-\frac{50}{z^3}+\frac{2506}{z^2}-\frac{2118539063675}{157810688\,z}-\frac{21194427441}{78905344}.
		\]
		Once we have a solution of \eqref{eq:red_Bez}, we search for
		\[
			\overline a_0(z)\;=\;\widetilde a_0(z)+H_{0,1}(z)\;\phi_1(z),
		\]
		\[
			\overline a_1(z)\;=\;\widetilde a_1(z)-H_{0,1}(z)\;\phi_0(z),
		\]
		so that $\{\widehat{a}_{k}(z)\}_{k=0,\dots,3}$ fulfilling
		\[
			\overline a_i(z)\;=\;\widehat a_i(z^2)  + z\; \widehat a_{i+2}(z^2),\qquad i\in\{0,1\},
		\]
		lead to a symbol $a(z)$ satisfying $a(z)=za(z^{-1})$.
		For example, the choice
		\[\begin{array}{rcl}
			H_{0,1}(z)&=& \displaystyle -\frac{7064809147}{308224\,z} + \displaystyle \frac{281633113}{616448\,z^2} - \displaystyle \frac{2817667}{308224\,z^3} + \displaystyle \frac{119853}{616448\,z^4}\\ \\
			&&\quad + \displaystyle \frac{7302199}{596413440\,z^5} - \displaystyle \frac{3127}{1232896\,z^6} + \displaystyle \frac{947}{1331280\,z^7}			
		\end{array}\]
		leads to
		\[\begin{array}{rcl}
 			\overline{a}_0(z)&=& \displaystyle \frac{39501}{262144\,z} - \displaystyle \frac{4991}{262144\,z^2} - \displaystyle \frac{5643}{262144\,z^3} + \displaystyle \frac{24415849}{4362338304\,z^4}  + \displaystyle \frac{394938757}{40715157504\,z^5}
 \\ \\
 		&&\qquad  - \displaystyle \frac{61600783}{43623383040\,z^6} + \displaystyle \frac{15760091}{40715157504\,z^7} + \displaystyle \frac{947}{113602560\,z^8}
 		\end{array}\]
 		\[\begin{array}{rcl}
 			\overline{a}_1(z)&=&\displaystyle \frac{97643}{262144\,z}+ \displaystyle \frac{8415}{262144\,z^2}- \displaystyle \frac{7917}{262144\,z^3}- \displaystyle \frac{49446367}{7270563840\,z^4} + \displaystyle \frac{482174039}{40715157504\,z^5}
 \\ \\
 	&&\qquad + \displaystyle \frac{116624327}{43623383040\,z^6}- \displaystyle \frac{27054815}{40715157504\,z^7}+ \displaystyle \frac{4735}{68161536\,z^8},
		\end{array}\]
		and so
		\[
			\widehat{a}_0(z)\;=\;-\frac{4991}{262144\,z}+\frac{24415849}{4362338304\,z^2}-\frac{61600783}{43623383040\,z^3}+\frac{947}{113602560\,z^4},
		\]
		\[
			\widehat{a}_1(z)\;=\;\frac{8415}{262144\,z}-\frac{49446367}{7270563840\,z^2}+\frac{116624327}{43623383040\,z^3}+\frac{4735}{68161536\,z^4},
		\]
		\[
			\widehat{a}_2(z)\;=\;\frac{39501}{262144\,z}-\frac{5643}{262144\,z^2}+\frac{394938757}{40715157504\,z^3}+\frac{15760091}{40715157504\,z^4},
		\]
		\[
			\widehat{a}_3(z)\;=\;\frac{97643}{262144\,z}-\frac{7917}{262144\,z^2}+\frac{482174039}{40715157504\,z^3}-\frac{27054815}{40715157504\,z^4}.
		\]
		Replacing the previous expressions in the above equations of $a_0(z)$, $a_1(z)$, $a_2(z)$ and $a_3(z)$ and using
		\[
			a(z)\;=\;a_0(z^4)\;+\;a_1(z^4)\;z\;+\;a_2(z^4)\;z^2\;+\;a_3(z^4)\;z^3,
		\]
		the first half of the resulting symmetric mask $\ba$ is
		\begin{equation}\label{eq:m4_d6_6pt}
			\begin{array}{c}
				\displaystyle\left \{\; \frac{947}{113602560},\; \frac{4735}{68161536},\; \frac{15760091}{40715157504},\; -\frac{27054815}{40715157504},\; -\frac{63782671}{43623383040},\right.\\ \\
				\displaystyle\frac{98441927}{43623383040},\; \frac{42911173}{5816451072},\; \frac{92071847}{5816451072},\; \frac{154804477}{10905845760},\; -\frac{79247347}{3635281920},\\ \\
				\displaystyle-\frac{143318065}{1938817024},\; -\frac{215643011}{1938817024},\; -\frac{71706399}{969408512},\; \frac{4869166267}{43623383040},\; \frac{2428957997}{5816451072},\\ \\
				\displaystyle\left.\frac{4331006815}{5816451072},\; \frac{528433771}{545292288} \;\right \}.
			\end{array}
		\end{equation}

\bigskip
The basic limit function $\varphi$ related to this mask is shown in Figure \ref{fig:m4_d6_6pt}, and two examples of interpolating curves can be found in Figure \ref{fig:m4_d6_6pt_interp}. We have that $\textrm{supp}(\varphi)=[-11/2,11/2]$ and, via joint spectral radius techniques, one can prove that $\varphi\in\mathcal{C}^{3.0507}(\mathbb{R})$. By construction the corresponding subdivision scheme reproduces polynomials of degree $5$. With respect to the primal interpolating quaternary scheme proposed by Conti et al. \cite{MR2775138}, which has a $\mathcal{C}^{2.2924}(\mathbb{R})$ basic limit function supported in $[-7/3,7/3]$ and reproduces quadratic polynomials only, the mask obtained here is much wider but achieve desirable properties in application such as $\mathcal{C}^3$-smoothness and reproduction of higher degree polynomials.
On the other hand the primal interpolating quaternary $6$-point scheme (see e.g.  \cite{MR3702925}) reproduces quintic polynomials as well but it has a $\mathcal{C}^{2.3198}(\mathbb{R})$ basic limit function supported in $[-11/3,11/3]$.
     \end{example}

	\begin{figure}[h!]
		\centering
		\includegraphics[scale=0.75]{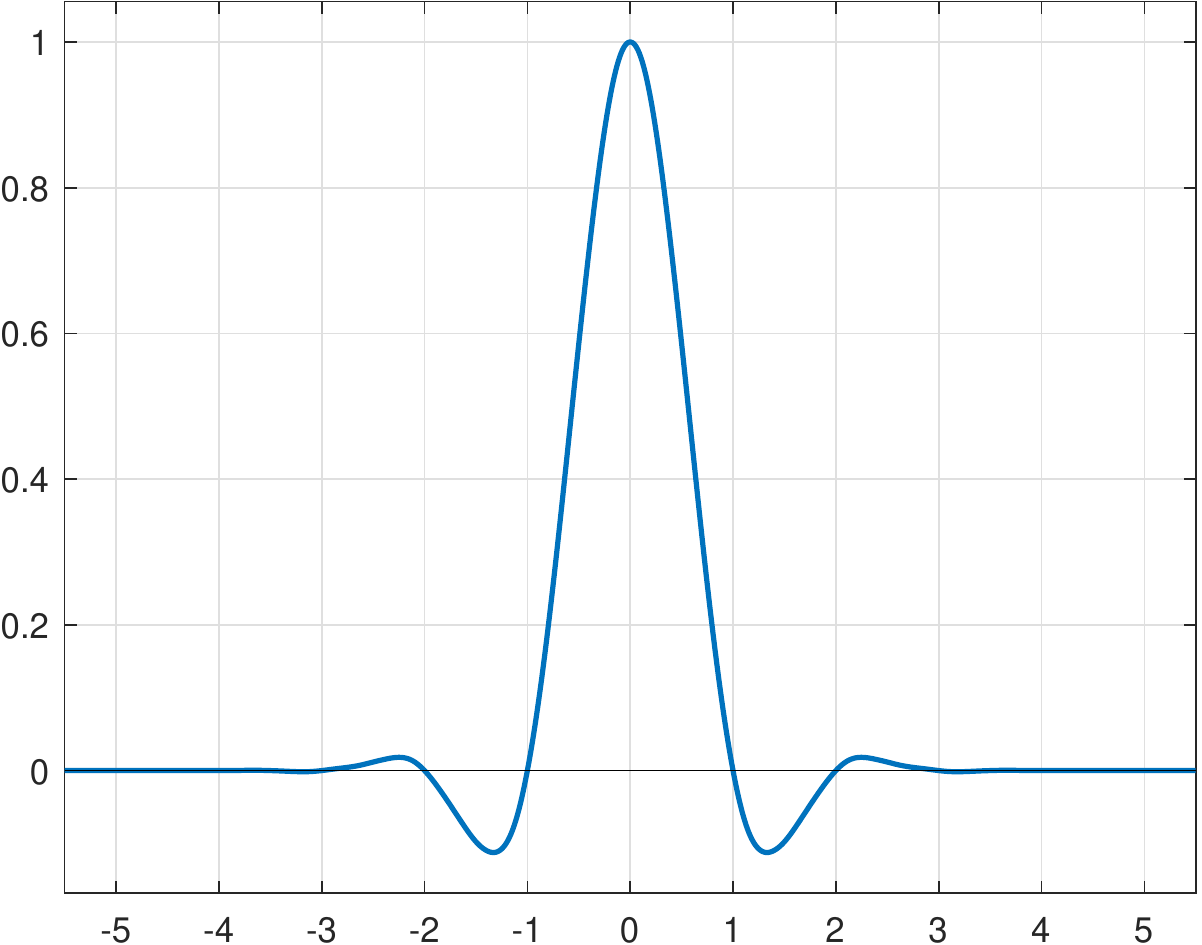}
		\caption{The graph of the basic limit function $\varphi$ related to the mask in \eqref{eq:m4_d6_6pt}.}
		\label{fig:m4_d6_6pt}
	\end{figure}

\bigskip
\bigskip

	\begin{figure}[h!]
		\centering
		\begin{minipage}{0.49\textwidth}
			\includegraphics[scale=0.5]{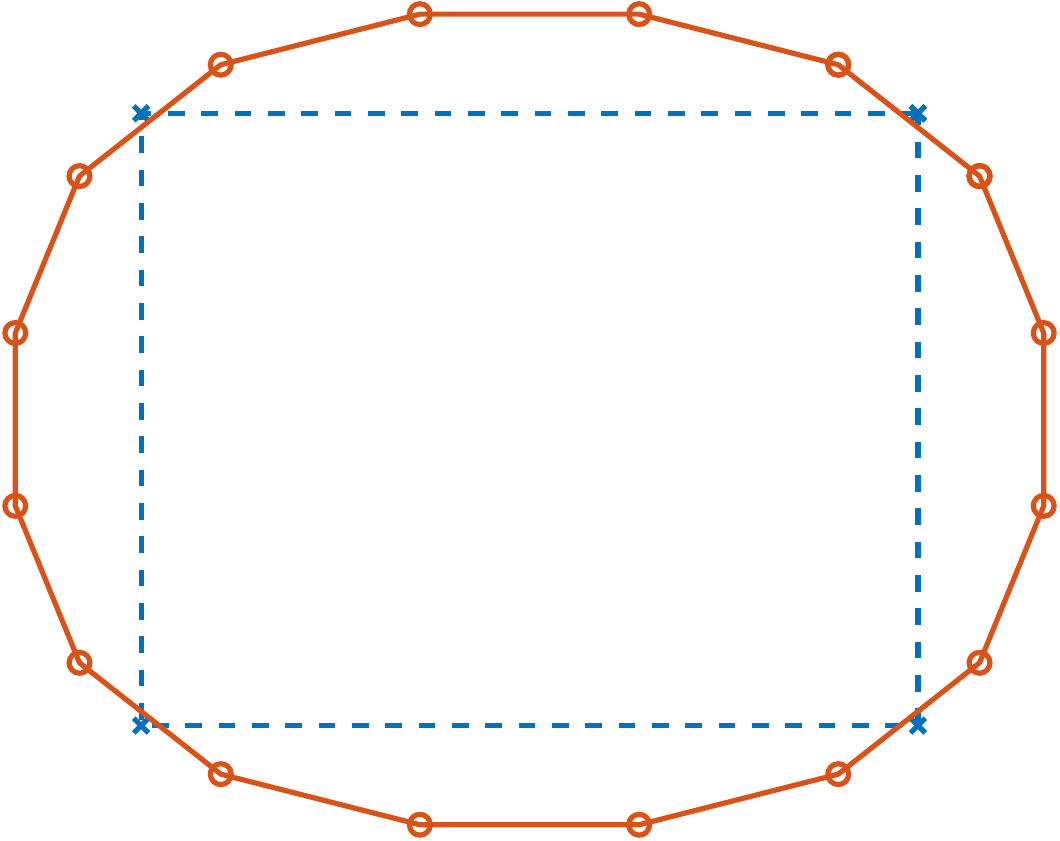}
		\end{minipage}
		\begin{minipage}{0.49\textwidth}
			\includegraphics[scale=0.5]{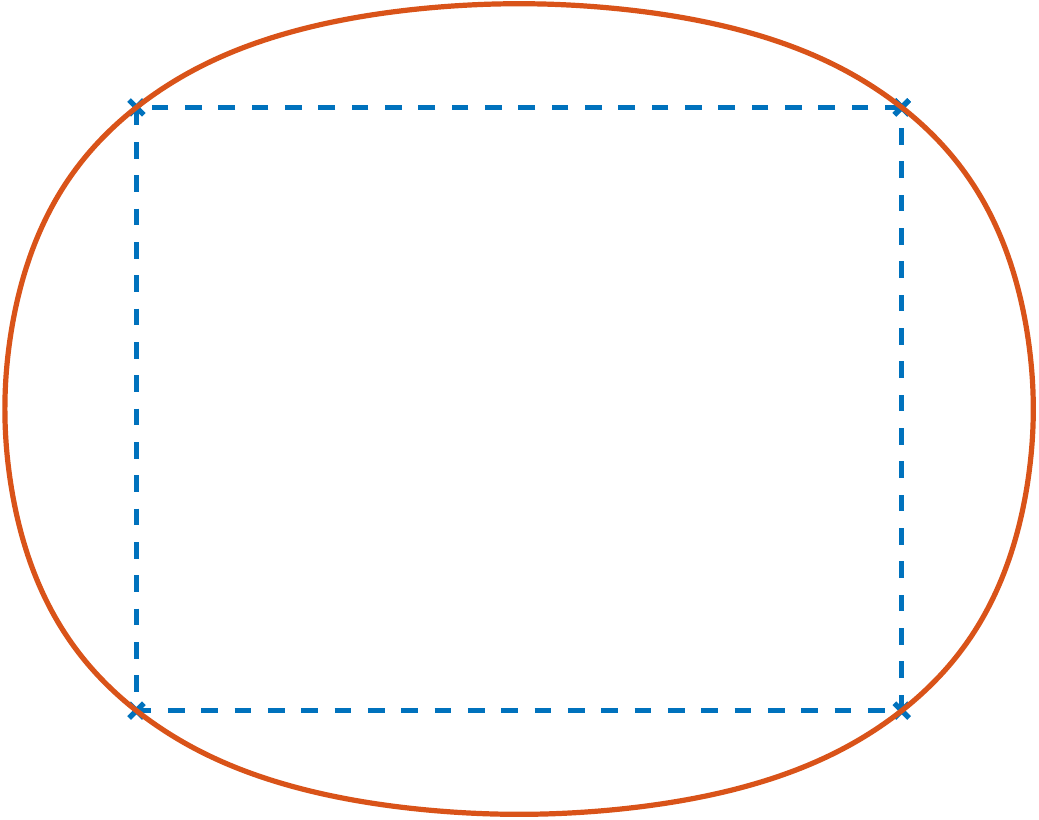}
		\end{minipage} \\ $ $\\
		\begin{minipage}{0.49\textwidth}
			\includegraphics[scale=0.5]{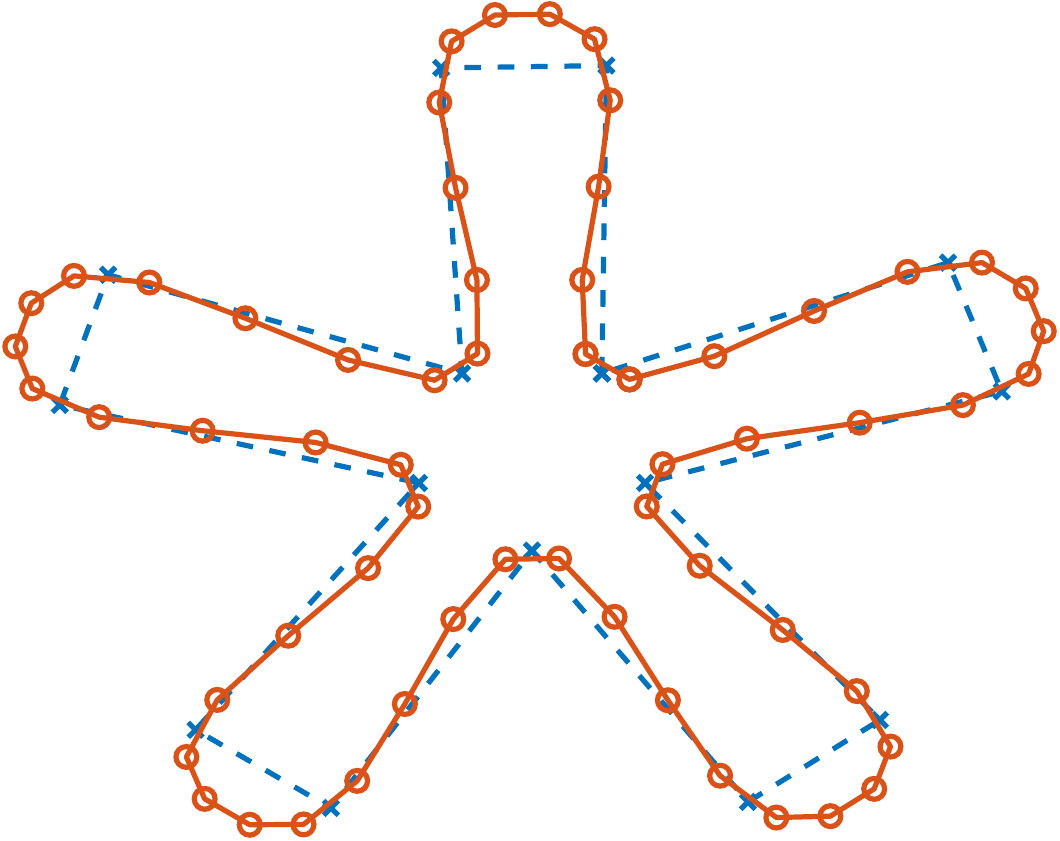}
		\end{minipage}
		\begin{minipage}{0.49\textwidth}
			\includegraphics[scale=0.5]{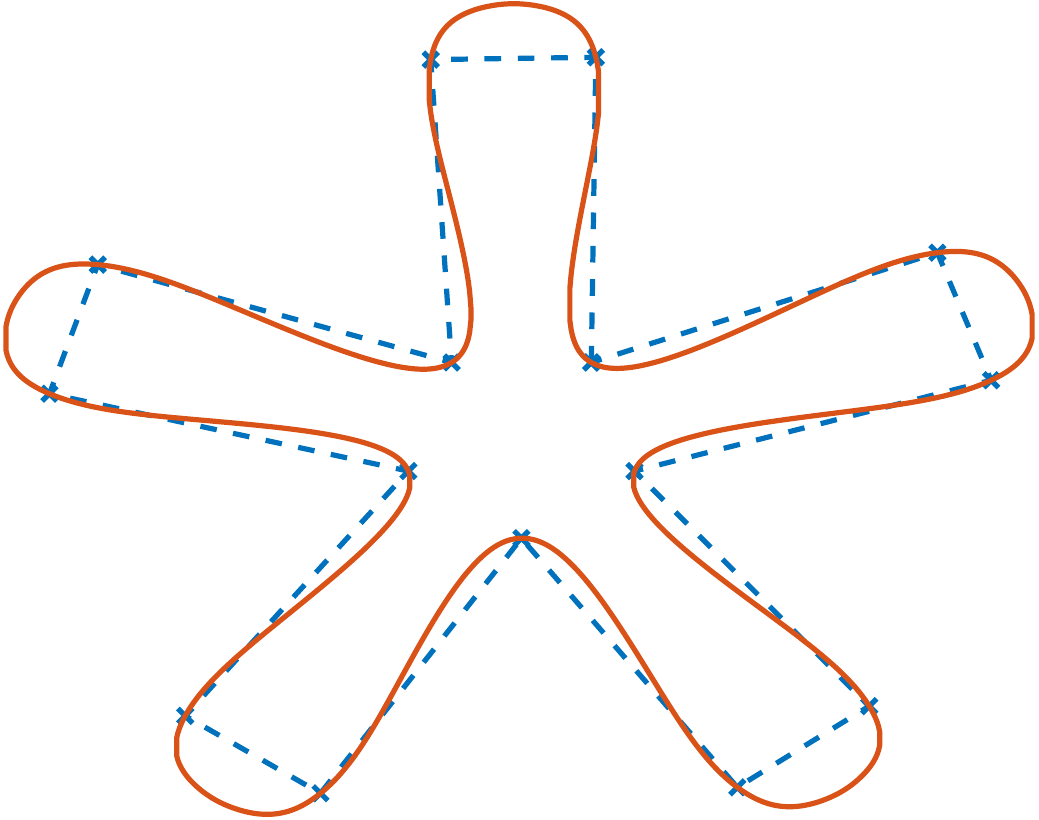}
		\end{minipage}
		\caption{Two examples of interpolating curves given by the subdivision scheme associated to the mask in \eqref{eq:m4_d6_6pt}. On the left, the first level of subdivision starting with the dotted control polygons; on the right, the corresponding interpolating limit curves.}
		\label{fig:m4_d6_6pt_interp}
	\end{figure}

\begin{acknowledgements}
The authors are members of INdAM - GNCS, which partially supported this work.
\end{acknowledgements}


\end{document}